\newtheorem{thm}{Theorem}[section]
\newtheorem{cor}[thm]{Corollary}
\newtheorem{conjecture}[thm]{Conjecture}
\newtheorem{lemma}[thm]{Lemma}
\newtheorem{prop}[thm]{Proposition}
\theoremstyle{definition}
\newtheorem{defn}[thm]{Definition}
\newtheorem{ex}[thm]{Example}
\theoremstyle{rem}
\newtheorem{rem}[thm]{Remark}
\numberwithin{equation}{subsection}
\begin{document}

\title[The tautological ring of $M_{1,n}^{ct}$]{The tautological ring of $M_{1,n}^{ct}$}
\author[M. Tavakol]{Mehdi Tavakol}
   \address{Department of Mathematics,
              KTH, 100 44 Stockholm, Sweden}
   \email{tavakol@math.kth.se}

\maketitle
\centerline {\bf Introduction.}
Let $M_{g,n}^{ct}$ be the moduli space of stable $n$-pointed genus $g$ curves of compact type and denote by $R^*(M_{g,n}^{ct})$
its tautological ring. Here, we study this ring in genus one. It is known that the tautological ring $R^*(M_{1,n}^{ct})$ is additively generated by boundary cycles, 
and it is the subalgebra of the Chow ring $A^*(M_{1,n}^{ct})$ (taken with $\mathbb{Q}$-coefficients throughout) of $M_{1,n}^{ct}$ 
generated by divisor classes $D_I,$ for $I \subset \{1,...,n\}$ with $|I|>1.$ Recall that a boundary cycle of $M_{1,n}^{ct}$ parameterizes 
stable curves whose dual graphs are trees, and $D_I$ is associated to those with one edge, for which $I$ is the marking set on the genus zero component. 
We study this ring to understand the space of relations among the generators. In particular, we prove that the tautological ring is Gorenstein.

We begin this note by recalling the definitions and known facts about the tautological algebras as well as the conjectural structure of them.

In the second section we consider a fixed pointed elliptic curve $(C;O),$ and we describe the reduced fiber of the projection $M_{1,n}^{ct} \rightarrow M_{1,1}^{ct}$ over $[(C;O)] \in M_{1,1}^{ct},$ 
which is denoted by $\overline{U}_{n-1},$ as a sequence of blow-ups of $C^{n-1}.$ As a result, we get a map $$F:\overline{U}_{n-1} \rightarrow M_{1,n}^{ct}.$$ 
There is a description of the Chow ring $A^*(\overline{U}_{n-1})$ of $\overline{U}_{n-1}$ in the third section.

Then we define the tautological ring $R^*(C^n)$ of $C^n$ as a subring of its Chow ring $A^*(C^n).$ We give a description of the pairing 
$$R^d(C^n) \times R^{n-d}(C^n) \rightarrow \mathbb{Q}$$ for $0 \leq d \leq n.$ In particular, we will see that this pairing is perfect. 

The fifth section starts with the definition of the tautological ring $R^*(\overline{U}_{n-1})$ of $\overline{U}_{n-1}.$ It is defined to be the subalgebra 
of its Chow ring generated by the tautological classes in $R^*(C^{n-1})$ and the classes of proper transforms of the exceptional divisors introduced 
in the construction of $\overline{U}_{n-1}.$ The study of the pairing $$R^d(\overline{U}_{n-1}) \times R^{n-1-d}(\overline{U}_{n-1}) \rightarrow \mathbb{Q},$$
for $0 \leq d \leq n-1,$ shows that it is perfect as well.

In the last section we study the fibers of the map $F:\overline{U}_{n-1} \rightarrow M_{1,n}^{ct},$ and we will see that the images of the tautological classes in 
$M_{1,n}^{ct}$ under the induced pull-back $$F^*:A^*(M_{1,n}^{ct}) \rightarrow A^*(\overline{U}_{n-1})$$ are elements of the tautological ring 
$R^*(\overline{U}_{n-1})$ of $\overline{U}_{n-1}$ and hence, it induces a map $$R^*(M_{1,n}^{ct}) \rightarrow R^*(\overline{U}_{n-1}),$$ 
which is denoted by the same letter $F^*$, by abuse of notation. Then, we will see that $F^*$ induces an isomorphism between the tautological rings involved. 
This gives a description of the ring $R^*(M_{1,n}^{ct})$ in terms of the generators $D_I$'s and the space of relations. In particular, from the proven result for 
$R^*(\overline{U}_{n-1}),$ we conclude that $R^*(M_{1,n}^{ct})$ is a Gorenstein ring.

\vspace{+10pt}
\noindent{\bf Acknowledgments.}
The author wishes to thank Carel Faber for introducing this project and useful discussions. The comments of Eduard Looijenga and Rahul Pandharipande are appreciated as well. 

\section{\bf  Review of known facts and conjectures about the tautological ring $R^*(M_{g,n}^{ct})$}
\medskip

Let  $\overline{M}_{g,n}$ be the moduli space of stable curves of genus $g$ with $n$ marked points. In [FP3] the system of tautological rings is defined to 
be the set of smallest $\mathbb{Q}$-subalgebras of the Chow rings, $$R^*(\overline{M}_{g,n}) \subset A^*(\overline{M}_{g,n}),$$satisfying the following two properties:
 
\begin{itemize}

\item
The system is closed under push-forward via all maps forgetting markings: $$\pi_*:R^*(\overline{M}_{g,n}) \rightarrow R^*(\overline{M}_{g,n-1}).$$

\item
The system is closed under push-forward via all gluing maps: 
$$\iota_*: R^*(\overline{M}_{g_1,n_1 \cup \{*\}}) \otimes  R^*(\overline{M}_{g_2,n_2 \cup \{ \bullet \}})  \rightarrow R^*(\overline{M}_{g_1+g_2,n_1+n_2}),$$
$$\iota_*: R^*(\overline{M}_{g,n \cup \{ *,\bullet \}}) \rightarrow R^*(\overline{M}_{g+1,n}),$$ with attachments along the markings $*$ and $\bullet.$
\end{itemize}

The standard $\psi,\kappa$ and $\lambda$ classes in $A^*(\overline{M}_{g,n})$ all lie in the tautological ring (see below for definition). The quotient $R^*(M_{g,n})$ of the tautological ring is defined as the restriction to the open subset $M_{g,n}.$
In [F1] it was conjectured that the tautological ring $R^*(M_g)$ is a Gorenstein algebra with socle in degree $g-2.$ It was raised as a question in [HL] whether the tautological ring of $\overline{M}_{g,n}$ satisfy Poincare duality and has the 
Lefschetz property with respect to $\kappa_1$, which was known to be ample [C]. In [F2] the following conjecture about the tautological ring $R^*(\overline{M}_{g,n})$ is stated:

\begin{conjecture}\label{bar}
$R^*(\overline{M}_{g,n})$ is Gorenstein with socle in degree $3g-3+n.$ 
\end{conjecture}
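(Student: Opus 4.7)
The plan is to attempt a proof by induction on the pair $(g,n)$, generalizing the strategy the present paper develops for the compact-type, genus-one case. The first step is to seek a smooth projective cover $F : \widetilde{M}_{g,n} \to \overline{M}_{g,n}$ whose Chow ring admits an explicit presentation --- ideally an iterated blow-up of a product of simpler moduli spaces, built inductively over a forgetful map $\overline{M}_{g,n} \to \overline{M}_{g,n-1}$ or a projection to $\overline{M}_{g,1}$, in the spirit of the construction of $\overline{U}_{n-1}$ from $C^{n-1}$. Tautological classes $\psi_i$, $\kappa_j$, and boundary divisors should pull back to explicit combinations of blow-up classes on the cover. If $F^*$ can be shown to land in a suitably defined tautological subring $R^*(\widetilde{M}_{g,n})$ and to induce a surjection onto $R^*(\overline{M}_{g,n})$, then Gorenstein-ness of the downstairs ring would follow from a Gorenstein statement on the cover via the projection formula and compatibility of top pairings.

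The heart of the argument is then to establish the perfect pairing for $R^*(\widetilde{M}_{g,n})$. The natural route is to produce a monomial basis indexed by combinatorial data --- admissible multi-indices, or decorated dual graphs whose decorations encode the chosen blow-up sequence --- compute the Gram matrix of this basis against the fundamental class using the string, dilaton and boundary relations, and verify non-degeneracy in each complementary pair of degrees $d$ and $3g-3+n-d$. For the low-dimensional strata one would feed in the base cases (Keel's presentation in genus zero, and the present paper for compact-type genus one), while higher-codimension strata would be handled by the gluing axiom of the tautological system together with the inductive hypothesis on smaller $(g',n')$.

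The main obstacle, and where I expect the scheme to stall without genuinely new input, is the identification and exhaustion of tautological relations in higher genus. The genus-one compact-type situation is especially favorable because the base $M_{1,1}^{ct}$ is one-dimensional and each fiber admits a concrete blow-up model; already for $\overline{M}_{1,n}$ with non-compact-type boundary one must account for $\psi$-contributions on components of self-intersecting elliptic curves, and no combinatorial presentation of all tautological relations is currently known in full generality. Pixton-type relations are conjecturally complete but not proven to exhaust the ideal, so in practice I would treat the conjecture as a roadmap: secure the genus-one compact-type case via the present paper as the base step, then attempt $g=2$ by an analogous blow-up-of-universal-curve construction, and defer the full inductive step until the structure of the relation ideal is better understood.
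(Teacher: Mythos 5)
The statement you were asked to prove is Conjecture \ref{bar}, and the paper does not prove it: it is recorded as an open conjecture of Faber, and the body of the paper establishes only the genus-one case of the distinct (though related) Conjecture \ref{ct}, namely that $R^*(M_{1,n}^{ct})$ is Gorenstein. So there is no proof in the paper to compare yours against, and your text is not a proof either --- it is a research program, and you say as much in your final paragraph. As a submission for this statement the gap is therefore total rather than local.

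Concretely, the two steps on which your outline depends are both unavailable. First, the existence of a smooth cover of $\overline{M}_{g,n}$ with an explicitly presented Chow ring is known essentially only in genus zero (Keel) and, for the compact-type locus in genus one, via the blow-up model $\overline{U}_{n-1}$ of the present paper; that construction leans on the fact that a single fixed elliptic curve $C$ gives a product $C^{n-1}$ dominating the fiber of $M_{1,n}^{ct}\rightarrow M_{1,1}^{ct}$, and on the exclusion of the boundary divisor $\Delta_{irr}$, neither of which has an analogue for $\overline{M}_{g,n}$ in higher genus. Second, even granting such a cover, the perfect-pairing verification requires knowing \emph{all} tautological relations in the relevant degrees; in the paper this is supplied by Getzler's relation together with the Hanlon--Wales eigenvalue computation for the intersection matrices, and no comparably exhaustive description of the relation ideal is known for general $g$ --- you note yourself that Pixton-type relations are not proven to generate everything. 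Your closing paragraph concedes both points, so the honest conclusion is that the statement remains open; what can actually be carried out of your plan is exactly what the paper proves, namely the compact-type genus-one analogue.
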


We now define the moduli space $M_{g,n}^{ct}$ and its tautological ring. To every $n$-pointed stable curve $(C;x_1,...,x_n)$ there is an associated dual graph. 
Its vertices correspond to the irreducible components of $C$ and edges correspond to intersection of components. Note that self intersection is allowed.
The curve $C$ is of compact type if its dual graph is a tree, or equivalently, the Jacobian of $C$ is an abelian variety. The moduli space $M_{g,n}^{ct}$ 
parametrizes stable $n$-pointed curves of genus $g$ of compact type. One can also define $M_{g,n}^{ct}$ as the complement of the boundary divisor $\Delta_{irr}$ 
of irreducible singular curves and their degenerations.

The tautological ring, $R^*(M_{g,n}^{ct}) \subset A^*(M_{g,n}^{ct})$, for the moduli space $M_{g,n}^{ct}$, is defined to be the image of $R^*(\overline{M}_{g,n})$ via the natural map,
$$R^*(\overline{M}_{g,n}) \subset A^*(\overline{M}_{g,n}) \rightarrow A^*(M_{g,n}^{ct}).$$

The quotient ring $R^*(M_{g,n}^{ct})$ admits a canonical non-trivial linear evaluation $\epsilon$ to $\mathbb{Q}$ obtained by integration involving the $\lambda_g$ class, the Euler class of the Hodge bundle.  

Recall that the Hodge bundle $\mathbb{E}$ is the locally free $Q$-sheaf of rank $g$  defined by  $\mathbb{E}=\pi_* \omega,$ 
where $\pi:\overline{M}_{g,n+1} \rightarrow \overline{M}_{g,n}$ forgets the marking $n+1$ and $\omega$ denotes its relative dualizing sheaf.  
The fiber of $\mathbb{E}$ over a moduli point $[(C;x_1,...,x_n)]$ is the $g$-dimensional vector space $H^0(C,\omega_C).$ The class $\lambda_i$ is defined to be the $i^{th}$ Chern class $c_i(\mathbb{E})$ of the Hodge bundle. 
The class $\kappa_i$ is defined to be the push-forward $\pi_*(K^{i+1})$, where $K=c_1(\omega).$ The class $\psi_i$ is the pull back $\sigma_i^*(K)$ of $K$ along $\sigma_i:\overline{M}_{g,n} \rightarrow \overline{M}_{g,n+1}$, 
where $\sigma_1,...,\sigma_n$ are the natural sections of the map $\pi.$ It is the first Chern class of the bundle on the moduli space whose fiber at the moduli point $[(C;x_1,...,x_n)]$ 
is the cotangent space to $C$ at the $i^{th}$ marking.

The class $\lambda_g$ vanishes when restricted to the complement $\Delta_{irr}.$ This gives rise to an evaluation $\epsilon$ on $A^*(M_{g,n}^{ct}):$ 
$$\xi \mapsto \epsilon( \xi)=\int_{\overline{M}_{g,n}} \xi.\lambda_g.$$ 

The non-triviality of the $\epsilon$ evaluation is proven by explicit integral computations. The following formula for $\lambda_g$ integrals is proven in [FP2]:

$$\int_{\overline{M}_{g,n}} \psi_1^{\alpha_1}...\psi_n^{\alpha_n} \lambda_g= \binom{2g-3+n}{\alpha_1,...,\alpha_n} \int_{\overline{M}_{g,1}} \psi_1^{2g-2} \lambda_g.$$

The integrals on the right side are evaluated in terms of the Bernoulli numbers: $$\int_{\overline{M}_{g,1}} \psi_1^{2g-2} \lambda_g=\frac{2^{2g-1}-1}{2^{2g-1}}\frac{|B_{2g}|}{(2g)!}.$$

This proves the non-triviality of the evaluation since $B_{2g}$ doesn't vanish. 

It is proven in [GV] that $R^*(M_{g,n}^{ct})$ vanishes in degrees $> 2g-3+n$ and is 1-dimensional in degree $2g-3+n.$ It was speculated in [FP1] that 
$R^*(M_g^{ct})$ is a Gorenstein algebra with socle in codimension $2g-3.$ The following conjecture is stated in [F2]:

\begin{conjecture}\label{ct}
$R^*(M_{g,n}^{ct})$ is Gorenstein with socle in degree $2g-3+n.$ 
\end{conjecture}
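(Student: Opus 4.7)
The plan is to prove the conjecture by induction on $(g,n)$, extending to arbitrary genus the auxiliary-variety strategy that the paper develops for $g=1$. The vanishing $R^{>2g-3+n}(M_{g,n}^{ct})=0$ and the one-dimensionality of the socle $R^{2g-3+n}(M_{g,n}^{ct})\cong\mathbb{Q}$ are already due to Graber--Vakil, so the entire content of the conjecture is that the intersection pairing
\[
R^d(M_{g,n}^{ct}) \times R^{2g-3+n-d}(M_{g,n}^{ct}) \To \mathbb{Q}, \qquad (\xi,\eta)\mapsto \epsilon(\xi\cdot\eta),
\]
is perfect for every $0\le d\le 2g-3+n$. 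The base case $g=0$ reduces to the classical fact that the Chow ring of the smooth projective variety $\overline{M}_{0,n}=M_{0,n}^{ct}$ is Gorenstein, and the base case $g=1$ is the main result of this paper.

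For the inductive step, fix a smooth curve $C$ of genus $g$ and analyze the fiber of $M_{g,n}^{ct}\to M_g^{ct}$ over $[C]$. This fiber is a Fulton--MacPherson style compactification of the configuration space $C^n\setminus\Delta$ obtained by allowing collisions through rational-tail bubbles, and it admits a presentation as an iterated blow-up of $C^n$ along (partial) diagonals, generalizing the construction of $\overline{U}_{n-1}$ in Section 2. One defines its tautological ring as the subalgebra of the Chow ring generated by a suitable tautological subring of $A^*(C^n)$ (built from diagonals, point classes, and, for $g\ge 2$, the Beauville--Deninger--Murre classes pulled back from the Jacobian) together with the proper transforms of the exceptional divisors. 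The first objective is to verify that this ring is Gorenstein by the analogue of Sections 4--5: produce an explicit monomial basis and dual basis assembled from exceptional divisor classes and diagonals. Carrying out the same blow-up construction in families over $M_g^{ct}$ would yield a smooth Deligne--Mumford stack $\widetilde{X}_{g,n}$ and a proper birational morphism $F:\widetilde{X}_{g,n}\to M_{g,n}^{ct}$; the argument of the last section of the paper would then be adapted to show that $F^*$ identifies the two tautological rings, transporting the Gorenstein property from $\widetilde{X}_{g,n}$ onto $M_{g,n}^{ct}$.

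The principal obstacle is that the global step requires, as an inductive input, the Gorenstein property of $R^*(M_g^{ct})$ itself in socle degree $2g-3$, because the Leray-type decomposition of the pairing on $\widetilde{X}_{g,n}$ will split into a piece living along the fibers and a piece genuinely living on the base $M_g^{ct}$. This base case is known unconditionally only in small genus, and the analogous Gorenstein statement for $R^*(M_{g,n})$ without the compact-type restriction is known to fail in some cases by Petersen--Tommasi, so delicate care is needed to leverage the compact-type condition that rules out those counterexamples. A secondary technical obstacle, absent in genus one, is that for $g\ge 2$ the full Chow ring of $C^n$ is infinite-dimensional and only a carefully chosen tautological subring---defined via the Beauville decomposition and incorporating Ceresa-type cycles---can reasonably be expected to pair perfectly, so Section 4 does not generalize formally and the right subring of $A^*(C^n)$ must be identified by hand. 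For these reasons the realistic outcome of this plan is a conditional theorem reducing Conjecture \ref{ct} for $(g,n)$ to Conjecture \ref{ct} for $(g,0)$, together with unconditional statements in those genera where the base case has been settled by other means.
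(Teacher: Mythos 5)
The statement you are addressing is stated in the paper as a \emph{conjecture} (due to Faber), and the paper does not prove it in this generality: its actual theorem is the case $g=1$, obtained via the blow-up model $\overline{U}_{n-1}\to M_{1,n}^{ct}$ and the explicit perfect pairing on $R^*(\overline{U}_{n-1})$. Your submission is likewise not a proof. You present a program for general $g$ and you concede at the end that its ``realistic outcome'' is a conditional reduction of the case $(g,n)$ to the case $(g,0)$; a proof attempt that terminates in an open base case has a gap by its own admission, so the verdict here cannot be that the statement has been established.

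Beyond that structural point, two of the steps you defer are known obstructions rather than mere technicalities. First, the fiberwise input: your plan needs a Gorenstein tautological ring for (a compactified configuration space of) $C^n$ with $g(C)\ge 2$, but the paper's own Remark at the end of Section 4 records the Green--Griffiths result that $R^*(C^2)$ already fails to be Gorenstein for a generic curve of genus $\ge 4$; so the analogue of Section 4 is not merely ``not formal'' --- the most natural candidate subring is false, and no replacement (Beauville decomposition or otherwise) is exhibited or known to work. Second, the globalization step in genus one is not a Leray-type argument over the base: it rests on the fact that $R^*(M_{1,n}^{ct})$ is generated by the boundary divisors $D_I$ and on explicit relations (Getzler's relation and its pull-backs) that let one build the inverse homomorphism $G$; there is no higher-genus analogue of either ingredient, since $\psi$ and $\kappa$ classes are no longer boundary and the relation theory is unknown. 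Finally, subsequent work of Petersen and Tommasi shows that $R^*(M_{2,n}^{ct})$ fails to be Gorenstein for $n$ sufficiently large, so the conjecture as stated is in fact false for $g\ge 2$; no strategy along these lines can close the general case, and the honest conclusion is the one the paper draws --- a theorem for $g=1$ only.
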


A compactly supported version of the tautological algebra is defined in [HL]. The algebra $R^*_c(M_{g,n})$ is defined to be the set of elements in 
$R^*(\overline{M}_{g,n})$ that restrict trivially to the Deligne-Mumford boundary. This is a graded ideal in $R^*(\overline{M}_{g,n})$ and the intersection product defines a map 
$$R^*(M_{g,n}) \times R^{*}_c(M_{g,n}) \rightarrow R^{*}_c(M_{g,n})$$ that makes $R^*_c(M_{g,n})$ a $R^*(M_{g,n})$-module. 
In [HL] they formulated the following conjecture for the case $n=0$:

\begin{conjecture}\label{hl}
(A) The intersection pairings $$R^k(M_g) \times R^{3g-3-k}_c(M_g) \rightarrow R_c^{3g-3}(M_g) \cong \mathbb{Q}$$ are perfect for $k \geq 0.$

(B) In addition to (A), $R^*_c(M_g)$ is a free $R^*(M_g)$-module of rank one. 
\end{conjecture}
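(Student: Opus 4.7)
The plan is to reduce both parts of the conjecture to an explicit statement about a single distinguished class. The natural candidate to freely generate $R^*_c(M_g)$ as an $R^*(M_g)$-module is $\eta := \lambda_g \lambda_{g-1}$, a class of degree $2g-1$. First I would verify that $\eta \in R^*_c(M_g)$, i.e.\ that its restriction to every Deligne--Mumford boundary divisor of $\overline{M}_g$ vanishes. On $\Delta_0 = \Delta_{\mathrm{irr}}$ this is immediate since $\lambda_g|_{\Delta_0} = 0$; on $\Delta_h$ for $h \ge 1$, the Hodge bundle splits as $\mathbb{E}|_{\Delta_h} = \mathbb{E}_h \oplus \mathbb{E}_{g-h}$ on the normalization, and combining this decomposition with Mumford's relation $c(\mathbb{E}) \cdot c(\mathbb{E}^\vee) = 1$ lets one rewrite $\lambda_g\lambda_{g-1}|_{\Delta_h}$ and verify that it vanishes. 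Non-triviality of $\eta$ in the socle follows from the $\lambda_g$-integral formula recalled in the excerpt, so $\eta$ is at least a nonzero element of $R^{2g-1}_c(M_g)$.

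Having singled out $\eta$, the core of the argument is to prove that multiplication by $\eta$ induces an isomorphism
$$\mu_\eta \colon R^k(M_g) \longrightarrow R^{k+2g-1}_c(M_g), \qquad x \longmapsto x \cdot \eta,$$
for every $k \ge 0$, which is Part (B) with generator $\eta$. Surjectivity would be attacked by taking a generating set for $R^*_c(M_g)$ consisting of tautological classes on $\overline{M}_g$ that restrict trivially to the boundary, and exhibiting each one as $\eta$ times a class pulled back from $M_g$; Mumford's formulas for $\lambda$-classes together with the vanishing $R^{>g-2}(M_g)=0$ constrain the possibilities sharply enough that such factorizations should be systematic. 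Injectivity is the harder half: it asserts that the annihilator of $\eta$ in $R^*(\overline{M}_g)$ coincides with the kernel of restriction to the open part, which amounts to a deep statement about relations between $\kappa$-, $\psi$-, and boundary classes.

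Granting (B), Part (A) follows from Faber's Gorenstein conjecture for $R^*(M_g)$: under the isomorphism $\mu_\eta$, the pairing of (A) translates into the internal intersection pairing $R^k(M_g) \times R^{g-2-k}(M_g) \to \mathbb{Q}$ given by $(x,y) \mapsto \epsilon(xy\eta)$, which is perfect precisely when $R^*(M_g)$ is Gorenstein with socle in degree $g-2$. Thus once (B) is known, (A) and Faber's Gorenstein conjecture become essentially equivalent.

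The principal obstacle is the injectivity of $\mu_\eta$ together with the reduction of (A) to Faber's conjecture. Both steps demand control over tautological relations on $\overline{M}_g$ that goes well beyond pure boundary geometry, and the second shows that the proposed strategy is conditional on an independent open problem rather than unconditional. The approach therefore recasts Hain--Looijenga as the conjunction of an explicit freeness statement for $\lambda_g\lambda_{g-1}$ and Faber's conjecture, which seems to be the correct level of difficulty to expect.
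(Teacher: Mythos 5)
The statement you were given is Conjecture \ref{hl}, the Hain--Looijenga conjecture for $M_g$. The paper does not prove it: it is quoted from [HL] as an open problem, and the only unconditional theorem in the paper is the genus-one case of Conjecture \ref{ct} for $M_{1,n}^{ct}$. So there is no proof in the paper to compare against, and your proposal is not a proof either --- as you yourself concede, every load-bearing step is left open. Concretely: surjectivity of $\mu_\eta$ is only asserted to ``should be systematic''; injectivity of $\mu_\eta$ is flagged as ``the harder half'' and is equivalent to controlling all tautological relations on $\overline{M}_g$ restricting trivially to $M_g$, which is exactly the unsolved content of the conjecture; and Part (A) is obtained only conditionally on Faber's Gorenstein conjecture for $R^*(M_g)$, itself open (and, in light of later work on the Faber--Zagier and Pixton relations, doubtful for large $g$). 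A web of implications among open conjectures does not establish any of them. Note also that the paper already records precisely this web for the compact-type analogue: citing [F3], statement D of Conjecture \ref{HL} follows from Conjectures \ref{bar} and \ref{ct}, and statements \ref{ct} and C follow from D. Your proposal rediscovers the $M_g$-version of these reductions, with $\eta=\lambda_g\lambda_{g-1}$ as the expected free generator, but adds no unconditional input.

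Two smaller inaccuracies in your sketch. First, the non-triviality of $\lambda_g\lambda_{g-1}$ in the socle of $R^*_c(M_g)$ does not follow from the $\lambda_g$-integral formula recalled in the paper: that formula computes $\int\psi_1^{\alpha_1}\cdots\psi_n^{\alpha_n}\lambda_g$ and governs the evaluation on $R^*(M_{g,n}^{ct})$; the relevant non-vanishing for $M_g$ is that of the $\lambda_g\lambda_{g-1}$-integrals, established in [FP1]. Second, while your verification that $\lambda_g\lambda_{g-1}$ restricts to zero on each $\Delta_h$ (via $\mathbb{E}|_{\Delta_h}=\mathbb{E}_h\oplus\mathbb{E}_{g-h}$ and $\lambda_h^2=0=\lambda_{g-h}^2$ from Mumford's relation) is correct, this only shows $\eta\in R^{2g-1}_c(M_g)$; it does not begin to address whether $\eta$ generates $R^*_c(M_g)$ freely. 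If your goal is to engage with what the paper actually proves, the place to direct this effort is the genus-one compact-type statement, where the author's blow-up model $\overline{U}_{n-1}$ and the explicit perfect pairing on $R^*(C^n)$ replace all of the conjectural input you rely on.
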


In a similar fashion one defines $R^*_c(M_{g,n}^{ct})$ as the set of elements in $R^*(\overline{M}_{g,n})$ 
that pull back to zero via the standard map $\overline{M}_{g-1,n+2} \rightarrow \overline{M}_{g,n}$ onto $\Delta_{irr}.$ The analogue of the conjectures above 
for the spaces $M_{g,n}^{ct}$ instead of $M_g$ and its relation with the conjecture \ref{ct} is discussed in [F3].  First consider the analogue of the conjectures \ref{hl} as follows:

\begin{conjecture} \label{HL}
(C) The intersection pairings 
$$R^k(M_{g,n}^{ct}) \times R^{3g-3+n-k}_c(M_{g,n}^{ct}) \rightarrow R_c^{3g-3+n}(M_{g,n}^{ct}) \cong \mathbb{Q}$$ are perfect for $k \geq 0.$

(D) In addition to C, $R^*_c(M_{g,n}^{ct})$ is a free $R^*(M_{g,n}^{ct})$-module of rank one. 
\end{conjecture}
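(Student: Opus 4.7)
The plan is to reduce both (C) and (D) to Conjecture \ref{ct} by exhibiting an explicit generator of $R^*_c(M_{g,n}^{ct})$ as a free $R^*(M_{g,n}^{ct})$-module. The natural candidate is the class $\lambda_g$ itself: along the gluing map $\iota:\overline{M}_{g-1,n+2}\to\overline{M}_{g,n}$, the pullback of the Hodge bundle is an extension of the rank $g-1$ Hodge bundle by a trivial line bundle, hence $\iota^*\lambda_g=0$ and $\lambda_g\in R^g_c(M_{g,n}^{ct})$. Multiplication by $\lambda_g$ therefore gives an $R^*(M_{g,n}^{ct})$-module map
$$\mu:R^{*-g}(M_{g,n}^{ct})\To R^*_c(M_{g,n}^{ct}),\qquad \xi\longmapsto\lambda_g\xi,$$
well defined by the projection formula, and (D) is exactly the assertion that $\mu$ is an isomorphism. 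Granting (D), the pairing in (C) in degree $k$ becomes, after dividing out the common factor of $\lambda_g$,
$$R^k(M_{g,n}^{ct})\times R^{2g-3+n-k}(M_{g,n}^{ct})\To R^{2g-3+n}(M_{g,n}^{ct})\cong\mathbb{Q},$$
which is perfect by Conjecture \ref{ct}. Thus (D) together with Conjecture \ref{ct} yields (C), and one can hope to establish them simultaneously.

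\textbf{Proving that $\mu$ is an isomorphism.} The argument breaks into three steps. At the socle, Graber--Vakil give $R^{2g-3+n}(M_{g,n}^{ct})\cong\mathbb{Q}$ with non-degenerate $\epsilon$-evaluation, and the Faber--Pandharipande $\lambda_g$ integrals quoted in the introduction imply $\mu$ is nonzero, hence an isomorphism, in top degree. For surjectivity of $\mu$ in all degrees, one must show that the kernel of $\iota^*:R^*(\overline{M}_{g,n})\to R^*(\overline{M}_{g-1,n+2})$, modulo classes supported on $\Delta_{irr}$, is generated by $\lambda_g$. The intended attack is by filtration on the boundary: on the open stratum $M_{g,n}$, one uses Mumford's relation $c(\mathbb{E})c(\mathbb{E}^\vee)=1$ together with the explicit behaviour of $\iota^*$ on tautological monomials to peel off a factor of $\lambda_g$ from any class dying on $\Delta_{irr}$; on each deeper stratum in the compact-type stratification one proceeds by induction using the product decomposition of the stratum into lower-genus compact-type moduli spaces, where the corresponding statement is applied. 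Injectivity of $\mu$ is then immediate: if $\lambda_g\xi=0$ in $R^*_c$, then $\epsilon(\xi\zeta)=\int_{\overline{M}_{g,n}}\lambda_g\xi\zeta=0$ for every $\zeta\in R^*(M_{g,n}^{ct})$, and the Gorenstein property of $R^*(M_{g,n}^{ct})$ (Conjecture \ref{ct}) forces $\xi=0$.

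\textbf{Main obstacle.} The crux is the surjectivity step: there is no formal reason within the abstract tautological formalism that every class killed by $\iota^*$ should carry a $\lambda_g$-factor, and the needed statement is really a structural theorem about the kernel of the gluing pullback inside $R^*(\overline{M}_{g,n})$. In the genus-one case treated in the present paper this is essentially visible by inspection, since on $M_{1,n}^{ct}$ the only $\lambda$-class that matters is $\lambda_1$ and it pulls back from $M_{1,1}^{ct}$; in higher genus, however, the factorization is far from automatic and will likely require either the full Pixton relations on $\overline{M}_{g,n}$ or a new geometric input---for instance a virtual-localization or degeneration computation that produces the $\lambda_g$-factorization stratum by stratum---before the scheme above can be pushed through. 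Everything else in the argument is essentially formal once this structural statement is in hand.
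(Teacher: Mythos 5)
The first thing to say is that the paper does not prove this statement: it is recorded as a conjecture (the compact-type analogue of the Hain--Looijenga conjecture), and the only assertions made about it are the implications quoted from [F3], namely that D follows from Conjectures \ref{bar} and \ref{ct} taken together, and that C and Conjecture \ref{ct} follow from D. The theorem actually proved in the paper is Conjecture \ref{ct} in genus one; Conjecture \ref{HL} is left open even there. So there is no proof in the paper against which to measure your argument, and a complete proof of the full statement would go well beyond what the paper claims.

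Your proposal is, by your own account, not a complete proof. The formal parts are sound and essentially reproduce the implications attributed to [F3]: $\iota^*\lambda_g=0$ does place $\lambda_g$ in $R^g_c(M_{g,n}^{ct})$; granting that multiplication by $\lambda_g$ gives an isomorphism $\mu:R^{*-g}(M_{g,n}^{ct})\to R^*_c(M_{g,n}^{ct})$ (a slightly stronger, explicit form of D, which fixes the generator), statement C reduces to Conjecture \ref{ct} exactly as you say; and injectivity of $\mu$ follows from the Gorenstein property together with the nonvanishing of the $\lambda_g$-integrals. But the entire weight rests on the surjectivity of $\mu$ --- that every tautological class on $\overline{M}_{g,n}$ restricting trivially to $\Delta_{irr}$ carries a factor of $\lambda_g$ modulo classes supported on the boundary --- and this is precisely the open structural statement. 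You name it yourself as the ``main obstacle'' and offer only a heuristic (Mumford's relation plus a stratum-by-stratum induction) without carrying it out. Note also that the conditional route the paper cites obtains D not from such a factorization theorem but from assuming both Gorenstein conjectures \ref{bar} and \ref{ct}, so your intended argument is not even the conditional implication on record. As it stands the proposal is a reasonable research plan, not a proof, and the genuine gap is the unproved $\lambda_g$-divisibility of the kernel of $\iota^*$.
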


In [F3] it is proven that for a given $(g,n),$ the statement D in \ref{HL} follows if the statements \ref{bar} and \ref{ct} hold.  
On the other hand, for such $(g,n)$ the statements \ref{ct} and C in \ref{HL} follow from D in \ref{HL}.  
It is also proven that a counterexample to the conjecture \ref{bar} leads to a disproof of the conjecture C in \ref{HL}.  

In this note we consider the case $g=1$ and prove that the conjecture \ref{ct} is true in this case. 

\section{\bf The space $\overline{U}_{n-1}$}

Let $C$ be a fixed elliptic curve and choose a point $O \in C$ as its origin. For a given natural number $n \in \mathbb{N},$ the space $U_{n-1}$ is defined to be the open subset 
$$\{(x_1,...,x_{n-1}) \in C^{n-1}: x_i \neq O \ \mathrm{and} \ x_i \neq x_j \ \mathrm{for} \ i \neq j  \}$$ of $C^{n-1}.$ The projection $\pi:U_{n-1} \times C \rightarrow U_{n-1}$ admits $n$ 
disjoint sections with smooth fibers and defines a map $$F:U_{n-1} \rightarrow M_{1,n},$$ where $M_{1,n}$ denotes the moduli space of smooth $n$-pointed curves of genus one. 
The map $F$ sends the point $P=(x_1,...,x_{n-1})$ of $U_{n-1}$ to the class of the pointed curve $(C;x_1,...,x_{n-1},O).$

For a subset $I$ of $\{1,...,n\},$ let $X_I \subset C^{n-1}$ be the $|I|$-dimensional subvariety defined by 
$$\left\{ \begin{array}{ll}
x_i=x_j \qquad \mathrm{for} \ i,j \in \{1,...,n\}-I & \mathrm{if}  \ n \in I \\
\\
x_i=O  \qquad \mathrm{for} \ i \in \{1,...,n-1\}-I & \mathrm{if}  \ n \notin I.  \\
\end{array} \right.$$

The space $\overline{U}_{n-1}$ is constructed from $C^{n-1}$ by the following sequence of blow-ups:

At step zero blow-up $C^{n-1}$ at the point $X_0,$ and at the $k^{th}$ step, for $1 \leq k \leq n-3,$ 
blow-up the space obtained in the previous step along the regularly embedded union of the proper transforms of the subvarieties $X_I,$ where $|I|=k.$ 

The space $\overline{U}_{n-1}$ contains $U_{n-1}$ as an open dense subset. There exists a family of stable curves of genus one of compact type over $\overline{U}_{n-1},$ 
whose total space is isomorphic to $\overline{U}_n.$ The resulting family is denoted by $\pi:\overline{U}_n \rightarrow \overline{U}_{n-1}$ by abuse of notation. 
Since $\pi^{-1}(U_{n-1})$ is isomorphic to the product $U_{n-1}\times C,$ on which $\pi$ is projection onto the first factor, and this coincides with the former definition of $\pi$ 
given above, there is no danger of confusion. The map $\pi$ admits $n$ disjoint sections in the smooth locus of the fibers, and defines a morphism 
$$F: \overline{U}_{n-1} \rightarrow M_{1,n}^{ct}.$$ The morphism $F$ sends a geometric point $P \in \overline{U}_{n-1}$ to the moduli point of the pointed curve 
$(\pi^{-1}(P);x_1,...,x_n),$ where the $x_i$'s are the $n$ distinct points on the fiber $\pi^{-1}(P)$ obtained by intersecting the fiber $\pi^{-1}(P)$ with the $n$ disjoint sections of $\pi.$

\section{\bf The Chow ring $A^*(\overline{U}_{n-1})$}

In this section we recall some facts about the intersection ring of the blow-up $\widetilde{Y}$ of the smooth variety $Y$ along a smooth irreducible subvariety $Z$ 
from [FM]. When the restriction map from $A^*(Y)$ to $A^*(Z)$ is surjective, S. Keel has shown in [K] that the computations become simpler. 
We denote the kernel of the restriction map by $J_{Z/Y}$ so that $$A^*(Z)=\frac{A^*(Y)}{J_{Z/Y}}.$$ Define a Chern polynomial for $Z \subset Y,$ denoted by 
$P_{Z/Y}(t),$ to be a polynomial $$P_{Z/Y}(t)=t^d+a_1t^{d-1}+...+a_{d-1}t+a_d \in A^*(Y)[t],$$ where $d$ is the codimension of $Z$ in $Y$ 
and $a_i \in A^i(Y)$ is a class whose restriction in $A^i(Z)$ is $c_i(N_{Z/Y}),$ where $N_{Z/Y}$ is the normal bundle of $Z$ in $Y.$ We also require that $a_d=[Z],$ 
while the other classes $a_i,$ for $0<i<d,$ are determined only modulo $J_{Z/Y}.$

Let us verify the surjectivity of the restriction map from $A^*(Y)$ to $A^*(Z)$ in our case, when $Y=C^{n-1}$ and $Z=X_I$, for a subset $I$ of the set $\{1,...,n\}.$ 
First assume that $n$ doesn't belong to the set $I.$ Denote by $i_I:X_I \rightarrow C^{n-1}$ the inclusion map and by $\pi:C^{n-1} \rightarrow X_I$ the canonical projection. From the equality $\pi \circ i_I=id_{X_I}$
we conclude that the restriction map $i_I^*$ is surjective. It also follows that the push-forward map $(i_I)_*$ is injective. This will be used in \ref{R}. The case $n \in I$ is treated in a similar manner. In this case there is not a canonical 
projection $\pi:C^{n-1} \rightarrow X_I$, and one has to make a choice.

The following lemma can be used to compute $P_{Z/Y}$ when the subvariety $Z$ is a transversal intersection of divisor classes:

\begin{lemma}
(a) If $Z=D$ is a divisor, then $P_{D/Y}(t)=t+D.$

(b) If $V \subset Y$ and $W \subset Y$ are subvarieties meeting transversally in a variety $Z,$ and $V$ and $W$ have Chern polynomials $P_{V/Y}(t)$ and $P_{W/Y}(t),$ then $Z$ 
has a Chern polynomial $$P_{Z/Y}(t)=P_{V/Y}(t).P_{W/Y}(t).$$ In addition the restriction from $A^*(Y)[t]$ to $A^*(V)[t]$ maps $P_{W/Y}(t)$ to a Chern polynomial $P_{Z/V}(t)$ for $Z \subset V.$
\end{lemma}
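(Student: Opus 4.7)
My plan for this lemma is to verify directly, in each case, the three defining properties of a Chern polynomial: it should be monic of degree equal to the codimension of $Z$ in $Y$; its constant term must equal the fundamental class $[Z]$; and each intermediate coefficient $a_i \in A^i(Y)$ must restrict in $A^i(Z)$ to $c_i(N_{Z/Y})$.

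For (a) the verification is immediate: $P_{D/Y}(t) = t + D$ is monic of degree $1 = \operatorname{codim}(D \subset Y)$, its constant term is $[D]$, and since $N_{D/Y} \cong \mathcal{O}_Y(D)|_D$ one has $D|_D = c_1(N_{D/Y})$.

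For (b) the algebraic engine I would exploit is that the characteristic-polynomial form $P_E(t) = \prod_i (t + \alpha_i)$ of a vector bundle $E$ with Chern roots $\alpha_i$ is multiplicative under direct sums, $P_{E \oplus F}(t) = P_E(t) \cdot P_F(t)$. I would couple this with three standard transversality facts attached to a transversal intersection $V \cap W = Z$ in $Y$: (i) $\operatorname{codim}(Z) = \operatorname{codim}(V) + \operatorname{codim}(W)$, which fixes the degree of the product; (ii) $[V] \cdot [W] = [Z]$ in $A^*(Y)$, which fixes its constant term; and (iii) $N_{Z/Y} \cong N_{V/Y}|_Z \oplus N_{W/Y}|_Z$, which via the multiplicativity above forces the restriction to $A^*(Z)[t]$ of $P_{V/Y}(t) \cdot P_{W/Y}(t)$ to coincide with the Chern polynomial of $N_{Z/Y}$, so the intermediate coefficients restrict correctly.

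For the final sentence I would simply restrict $P_{W/Y}(t)$ to $A^*(V)[t]$ and re-verify the three conditions for $Z \subset V$: the degree matches because $\operatorname{codim}(W \subset Y) = \operatorname{codim}(Z \subset V)$; the constant term is $[W]|_V = [Z]_V$ by transversality read inside $V$; and each intermediate coefficient restricts further to $Z$ as $c_i(N_{W/Y}|_Z) = c_i(N_{Z/V})$, using the isomorphism $N_{Z/V} \cong N_{W/Y}|_Z$ that transversality provides. I do not expect any genuine obstacle, since the entire content is organizing these standard transversality identities; the one step requiring a moment's care is the multiplicativity of the Chern polynomial, but this is formal once one passes to Chern roots.
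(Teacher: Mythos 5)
Your verification is correct. The paper gives no argument of its own here --- it simply cites Lemma 5.1 of Fulton--MacPherson [FM] --- and your direct check of the three defining conditions (monic of degree equal to the codimension; constant term equal to $[Z]$, using $[V]\cdot[W]=[Z]$ from transversality; intermediate coefficients restricting to $c_i(N_{Z/Y})$ via $N_{Z/Y}\cong N_{V/Y}|_Z\oplus N_{W/Y}|_Z$ and the Whitney sum formula, and likewise $N_{Z/V}\cong N_{W/Y}|_Z$ for the final assertion) is precisely the content of that cited lemma.
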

\begin{proof}
This is Lemma 5.1 in [FM].
\end{proof}

We identify $A^*(Y)$ as a subring of $A^*(\widetilde{Y})$ by means of the map $\pi^*:A^*(Y) \rightarrow A^*(\widetilde{Y}),$ 
where $\pi:\widetilde{Y}\rightarrow Y$ is the birational morphism. Let $E \subset \widetilde{Y}$ be the exceptional divisor. The formula of Keel is as follows:

\begin{lemma}\label{K}
With the above assumptions and notations, the Chow ring $A^*(\widetilde{Y})$ is given by $$A^*(\widetilde{Y})=\frac{A^*(Y)[E]}{(J_{Z/Y}.E,P_{Z/Y}(-E))}.$$
\end{lemma}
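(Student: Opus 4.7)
The plan is to exhibit a ring homomorphism
$$\Phi\;:\;\frac{A^*(Y)[E]}{\bigl(J_{Z/Y}\cdot E,\; P_{Z/Y}(-E)\bigr)}\;\longrightarrow\; A^*(\widetilde{Y})$$
sending the formal variable $E$ to the class of the exceptional divisor and extending $\pi^*$ on the subring $A^*(Y)$, and then to prove it is an isomorphism by matching $A^*(Y)$-module decompositions on the two sides.

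First I would verify that the two listed families of relations hold in $A^*(\widetilde{Y})$. Write $i:Z\hookrightarrow Y$ and $j:E\hookrightarrow\widetilde{Y}$ for the inclusions and $q=\pi|_E:E\to Z$ for the projective bundle structure, so that $E\cong\mathbb{P}(N_{Z/Y})$ and $\mathcal{O}_{\widetilde{Y}}(E)|_E=\mathcal{O}_E(-1)$. For $\alpha\in J_{Z/Y}$ one has $i^*\alpha=0$, hence $j^*\pi^*\alpha=q^*i^*\alpha=0$, and the projection formula yields $\pi^*\alpha\cdot E=j_*(j^*\pi^*\alpha)=0$. For the relation $P_{Z/Y}(-E)=0$ I would invoke the excess intersection formula applied to the Cartesian square realising $E$ as $\pi^{-1}(Z)$: with excess bundle $\mathcal{Q}=q^*N_{Z/Y}/\mathcal{O}_E(-1)$ of rank $d-1$, it gives $\pi^*[Z]=j_*\,c_{d-1}(\mathcal{Q})$. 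Expanding $c_{d-1}(\mathcal{Q})$ via the Whitney formula, substituting $c_k(N_{Z/Y})=i^*a_k$ and $c_1(\mathcal{O}_E(1))=-E|_E$, and then pushing down by $j_*$ using the projection formula, rearranges exactly into $P_{Z/Y}(-E)=0$.

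Next, surjectivity of $\Phi$ would follow from the classical blow-up formula, which presents
$$A^*(\widetilde{Y})\;=\;\pi^*A^*(Y)\;\oplus\;\bigoplus_{k=1}^{d-1} j_*\bigl(q^*A^*(Z)\cdot \xi^{k-1}\bigr),$$
where $\xi=c_1(\mathcal{O}_E(1))=-E|_E$. Because the restriction $A^*(Y)\to A^*(Z)$ is surjective, every $\gamma\in A^*(Z)$ lifts to some $\tilde\gamma\in A^*(Y)$, and a brief projection-formula computation identifies $j_*(q^*\gamma\cdot\xi^{k-1})$ with a scalar multiple of $\pi^*\tilde\gamma\cdot E^k$, so each summand lies in the image of $\Phi$.

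The main obstacle will be injectivity. Here the strategy is to show that the left-hand side already has the correct size as an $A^*(Y)$-module. The relations $J_{Z/Y}\cdot E=0$ force the submodule spanned by $E^k$ (for $k\geq 1$) to be a quotient of $A^*(Z)\cdot E^k$, and the single relation $P_{Z/Y}(-E)=0$ expresses $E^d$ as an $A^*(Y)$-linear combination of $1,E,\dots,E^{d-1}$. Together these give a surjection
$$A^*(Y)\;\oplus\;\bigoplus_{k=1}^{d-1} A^*(Z)\cdot E^k\;\longrightarrow\;\frac{A^*(Y)[E]}{\bigl(J_{Z/Y}\cdot E,\; P_{Z/Y}(-E)\bigr)}.$$
Composing with $\Phi$ and comparing summand by summand with the blow-up formula shows that $\Phi$ is injective and that this surjection is in fact an isomorphism. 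The real content is to check that no additional hidden relation is present; this is forced by the $A^*(Y)$-module compatibility of $\Phi$ and the fact that the blow-up decomposition is a genuine direct sum.
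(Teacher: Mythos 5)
Your argument is correct, but it is worth saying up front that the paper does not prove this statement at all: its ``proof'' is the single line ``This is Lemma 5.3 in [FM]'', i.e.\ a citation of Keel's presentation of the Chow ring of a blow-up as recorded by Fulton--MacPherson. What you have written is essentially the standard proof of that cited lemma, carried out in full: the relation $J_{Z/Y}\cdot E=0$ from $j^*\pi^*=q^*i^*$ and the projection formula; the relation $P_{Z/Y}(-E)=0$ from the key formula $\pi^*[Z]=j_*c_{d-1}(\mathcal{Q})$ together with $c(\mathcal{Q})=q^*c(N_{Z/Y})/(1-\xi)$ and the normalization $a_d=[Z]$; and the isomorphism from sandwiching the quotient ring between $A^*(Y)\oplus\bigoplus_{k=1}^{d-1}A^*(Z)\cdot E^k$ and the blow-up decomposition $A^*(\widetilde{Y})=\pi^*A^*(Y)\oplus\bigoplus_{k=1}^{d-1}j_*\bigl(q^*A^*(Z)\cdot\xi^{k-1}\bigr)$, so that a composite of two surjections is an isomorphism and hence each is. The only places where I would ask you to be slightly more explicit are: (i) the well-definedness of the map $A^*(Z)\cdot E^k\to Q$, which uses that two lifts of $\gamma\in A^*(Z)$ differ by $J_{Z/Y}$ and that $J_{Z/Y}\cdot E^k=0$ for $k\ge 1$; and (ii) the fact that $j_*$ is injective on $\bigoplus_{i=0}^{d-2}\xi^i q^*A^*(Z)$, which follows from the split exact sequence $0\to A(Z)\to A(E)\oplus A(Y)\to A(\widetilde{Y})\to 0$ because any element of the kernel of $j_*$ of the form $c_{d-1}(\mathcal{Q})\cap q^*\gamma$ has top component $\xi^{d-1}q^*\gamma$, forcing $\gamma=0$. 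With those two remarks your ``no hidden relation'' step is fully rigorous; what your write-up buys over the paper is self-containedness, at the cost of invoking the blow-up exact sequence and excess intersection, which the paper avoids by deferring to [FM].
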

\begin{proof}
This is Lemma 5.3 in [FM].
\end{proof}

The next lemma relates a Chern polynomial $P_{\widetilde{V}/\widetilde{Y}}(t)$ of the proper transform $\widetilde{V}$ of a subvariety $V \subset Y$ to $P_{V/Y}(t):$

\begin{lemma}\label{proper}
Let $V$ be a subvariety of $Y$ not contained in $Z$ and let $\widetilde{V} \subset \widetilde{Y}$ be its proper transform. Suppose that $P_{V/Y}(t)$ is a Chern polynomial for $V.$

\begin{enumerate}
\item If $V$ 
meets $Z$ transversally, then $P_{V/Y}(t)$ is a Chern polynomial for $\widetilde{V}$ in $\widetilde{Y}.$
\item If $V$ contains $Z,$ then $P_{V/Y}(t-E)$ is a Chern polynomial for $\widetilde{V} \subset \widetilde{Y}.$
\end{enumerate}
\end{lemma}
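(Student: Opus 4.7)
The plan is to verify, in each case, the three defining properties of a Chern polynomial for $\widetilde V$ in $\widetilde Y$: it must be monic of degree $\operatorname{codim}_{\widetilde Y}\widetilde V$, its constant term must equal $[\widetilde V]$, and the coefficient of $t^{d-i}$ must restrict to $c_i(N_{\widetilde V/\widetilde Y})$ in $A^i(\widetilde V)$. For (1), transversality of $V$ with $Z$ forces the scheme-theoretic preimage $\pi^{-1}(V)$ to coincide with the proper transform $\widetilde V$, since restricting $\pi$ to $V$ realizes $\widetilde V\to V$ as the blow-up of $V$ along the smooth subvariety $V\cap Z$ and no component of the exceptional divisor is absorbed into $\pi^{-1}(V)$. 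This gives $[\widetilde V]=\pi^*[V]$, so the constant term $a_d=[V]$ of $P_{V/Y}(t)$ matches; transversality also yields $N_{\widetilde V/\widetilde Y}\cong(\pi|_{\widetilde V})^*N_{V/Y}$, and pulling the identities $a_i|_V=c_i(N_{V/Y})$ back along $\widetilde V\to V$ then finishes case (1).

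For (2), $\widetilde V$ is the blow-up of $V$ along $Z$, so its codimension in $\widetilde Y$ remains $d$, and $P_{V/Y}(t-E)$ is monic of the correct degree. The key identity is the constant-term equality
$$[\widetilde V]\;=\;P_{V/Y}(-E)\quad\text{in }A^d(\widetilde Y),$$
a residual-intersection formula: because $V\supset Z$, the pullback $\pi^*[V]$ acquires an excess contribution supported on $E$ which, computed from the Chern data of $N_{Z/Y}$ and $N_{Z/V}$, rearranges precisely into the expansion $(-E)^d+a_1(-E)^{d-1}+\cdots+a_d$. For the intermediate coefficient $\sum_{j\le i}a_j\binom{d-j}{i-j}(-E)^{i-j}$ of $t^{d-i}$, I would compute $c_i(N_{\widetilde V/\widetilde Y})$ from the standard short exact sequence relating it to $(\pi|_{\widetilde V})^*N_{V/Y}$ and a twist by $\mathcal O(-E_V)$ along the exceptional divisor $E_V=E\cap\widetilde V$; the resulting Chern-class factorization matches the binomial expansion of $P_{V/Y}(t-E)$ coefficient by coefficient, modulo the ideal vanishing on $\widetilde V$.

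The main obstacle, most acutely in case (2), is the clean identification of $[\widetilde V]$ with $P_{V/Y}(-E)$: the discrepancy between $\pi^*[V]$ and $[\widetilde V]$ is supported on $E$, and although the substitution $t\mapsto t-E$ is designed precisely to absorb it, verifying that the full polynomial rather than some partial correction appears requires the genuine residual-intersection input, using that $a_d=[V]$ and that the $a_i$ for $0<i<d$ restrict to $c_i(N_{Z/Y})$ on $Z$. Once that identity is in hand, the intermediate coefficient identities follow from manipulations inside the Keel presentation of $A^*(\widetilde Y)$ supplied by Lemma \ref{K}, along the lines of the arguments in [FM, \S 5].
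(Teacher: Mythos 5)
The paper does not actually prove this lemma: its ``proof'' is the single sentence ``This is Lemma 5.2 in [FM]'', so your attempt is necessarily a reconstruction rather than a parallel of the paper's argument. As a reconstruction it has the right skeleton and follows the standard route: in case (1), transversality gives $\pi^{-1}(V)=\widetilde{V}$ scheme-theoretically, hence $\pi^*[V]=[\widetilde{V}]$ and $N_{\widetilde{V}/\widetilde{Y}}\cong(\pi|_{\widetilde{V}})^*N_{V/Y}$, and the defining conditions of a Chern polynomial simply pull back; in case (2) you correctly isolate the two load-bearing facts. Be aware, though, that those two facts are essentially the entire content of case (2), and your write-up describes them rather than proves them. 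The constant-term identity $[\widetilde{V}]=P_{V/Y}(-E)$ is Fulton's key formula for the class of a proper transform under blow-up (\emph{Intersection Theory}, Theorem 6.7 and its corollaries); it does not fall out of generalities, and any complete proof must either cite it or redo the residual-intersection computation you allude to. For the intermediate coefficients, the cleaner statement is the isomorphism $N_{\widetilde{V}/\widetilde{Y}}\cong(\pi|_{\widetilde{V}})^*N_{V/Y}\otimes\mathcal{O}(-E)|_{\widetilde{V}}$, valid because $Z\subset V$ (local equations of $V$ pull back to equations vanishing to order one along $E$); there is no need for a short exact sequence. Granting it, the identity $c_i(N\otimes L)=\sum_{j}\binom{d-j}{i-j}c_j(N)\,c_1(L)^{i-j}$ for a line bundle $L$ says precisely that the Chern polynomial of $N\otimes\mathcal{O}(-E)$ is that of $N$ evaluated at $t-E$, which absorbs your coefficient-by-coefficient bookkeeping in one line. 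So: correct outline, correct identification of the two nontrivial inputs, but a complete proof only modulo citing those inputs --- which is, in effect, what the paper does wholesale by citing [FM].
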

\begin{proof}
This is Lemma 5.2 in [FM].
\end{proof}

We also need the following lemmas to relate the ideal $J_{\widetilde{V}/\widetilde{Y}}$ to the ideal $J_{V/Y}$ for a subvariety $V$ of $Y:$

\begin{lemma}
Suppose that $V$ is a nonsingular subvariety of $Y$ that intersects $Z$ transversally in an irreducible variety $V \cap Z,$ and that the restriction $A^*(V) \rightarrow A^*(V \cap Z)$ is also surjective. 
Let $\widetilde{V}=Bl_ZV.$ Then $A^*(\widetilde{Y}) \rightarrow A^*(\widetilde{V})$ is surjective, with kernel $J_{V/Y}$ if $V \cap Z$ is not empty, and kernel $(J_{V/Y},E)$ if $V \cap Z$ is empty.
\end{lemma}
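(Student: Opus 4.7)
The plan is to apply Keel's formula (Lemma~\ref{K}) to the two blow-ups $\widetilde{Y} \to Y$ and $\widetilde{V} \to V$, and then identify Keel's presentation of $A^*(\widetilde{V})$ with the reduction modulo $J_{V/Y}$ of Keel's presentation of $A^*(\widetilde{Y})$. First I note that by transversality of $V$ and $Z$, the proper transform of $V$ in $\widetilde{Y}$ is precisely $Bl_{V \cap Z}V$, and the exceptional divisor of $\widetilde{V} \to V$ is the pullback $E|_{\widetilde{V}}$. Since by Lemma~\ref{K} the ring $A^*(\widetilde{Y})$ is generated by $A^*(Y)$ together with $E$, and $A^*(Y) \to A^*(V)$ is surjective with $E$ restricting to $E|_{\widetilde{V}}$, the map $A^*(\widetilde{Y}) \to A^*(\widetilde{V})$ is surjective.

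For the case $V \cap Z \neq \emptyset$: the inclusion $J_{V/Y} \subset \ker$ is immediate. For the reverse, I would use Lemma~\ref{K} to write
$$A^*(\widetilde{Y})/J_{V/Y} = \frac{A^*(V)[E]}{\bigl(\overline{J_{Z/Y}} \cdot E,\; \overline{P_{Z/Y}}(-E)\bigr)},$$
and compare with Keel's presentation of $A^*(\widetilde{V})$. Two verifications are needed: (i) $\overline{P_{Z/Y}}(t)$ is a Chern polynomial for $V \cap Z$ in $V$, which follows from transversality since $N_{(V \cap Z)/V} = N_{Z/Y}|_{V \cap Z}$ and the constant term $[Z]$ restricts to $[V \cap Z]$ in $A^*(V)$; and (ii) $\overline{J_{Z/Y}} = J_{(V \cap Z)/V}$ as ideals in $A^*(V)$.

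The main obstacle is (ii). One inclusion is clear from the commutative square of restriction maps to $A^*(V \cap Z)$. For the reverse, I would reduce to showing that the kernel of the composition $A^*(Y) \twoheadrightarrow A^*(V) \twoheadrightarrow A^*(V \cap Z)$ equals $J_{V/Y} + J_{Z/Y}$; equivalently, the natural surjection $A^*(Y)/(J_{V/Y} + J_{Z/Y}) \to A^*(V \cap Z)$ is an isomorphism. This expresses that the ideal of $V \cap Z$ in $Y$ is the sum of the ideals of $V$ and $Z$, and should follow from transversality (which gives the splitting $N_{(V \cap Z)/Y} = N_{V/Y}|_{V \cap Z} \oplus N_{Z/Y}|_{V \cap Z}$) together with the assumed surjectivity of the relevant restriction maps, via a Mayer--Vietoris-style argument.

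Finally, for the case $V \cap Z = \emptyset$: here $\widetilde{V} \cong V$ lies in the complement of $E$, so $E|_{\widetilde{V}} = 0$, giving $E \in \ker$. Disjointness of $V$ and $Z$ forces $[Z]|_V = 0$, so $[Z] \in J_{V/Y}$. Setting $E = 0$ in Keel's presentation kills the relation $J_{Z/Y} \cdot E$ and reduces $P_{Z/Y}(-E)$ to its constant term $[Z]$, which already lies in $J_{V/Y}$. Hence $A^*(\widetilde{Y})/(J_{V/Y}, E) = A^*(Y)/J_{V/Y} = A^*(V) = A^*(\widetilde{V})$, completing the identification.
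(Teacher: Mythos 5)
The paper does not prove this lemma; it quotes it from Fulton--MacPherson and justifies it by the citation [FM, Lemma 5.4], so there is no internal argument to compare yours with. Your reduction --- quotient Keel's presentation of $A^{*}(\widetilde{Y})$ by $J_{V/Y}$ and match the result with Keel's presentation of $A^{*}(\widetilde{V})=Bl_{V\cap Z}V$ --- is the natural strategy, and step (i), the surjectivity argument, and the case $V\cap Z=\emptyset$ are all fine. The gap is step (ii), and the statement you propose to deduce it from is false under the stated hypotheses. Take $Y=\mathbb{P}^{3}$ and let $V,Z$ be two distinct planes meeting transversally in a line $L$: every surjectivity hypothesis holds and the normal bundles split exactly as you say, yet $J_{V/Y}+J_{Z/Y}=(h^{3})$ while $\ker\bigl(A^{*}(Y)\to A^{*}(L)\bigr)=(h^{2})$; equivalently $\overline{J_{Z/Y}}=0\neq (h^{2})=J_{L/V}$. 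Transversality controls the normal bundle and the fundamental class of $V\cap Z$, hence the Chern polynomial, but it says nothing about the kernel of the restriction map in lower codimension, so no Mayer--Vietoris-style argument can exist.

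Nor is the problem an artifact of $Z$ being a divisor, and it is not always absorbed by the relation $\overline{P_{Z/Y}}(-E)$. Take $Y=\mathbb{P}^{3}\times\mathbb{P}^{3}$, $Z=H_{1}\times H_{2}$ and $V=H_{1}'\times H_{2}'$ products of distinct planes, so that $Z$ has codimension $2$ and $V\cap Z\cong\mathbb{P}^{1}\times\mathbb{P}^{1}$. All hypotheses in the statement hold, but $h_{1}^{2}E$ restricts to zero on $\widetilde{V}$ (because $h_{1}^{2}$ vanishes on $V\cap Z$, so its product with the exceptional divisor of $\widetilde{V}\to V$ dies by Keel's relation for that blow-up), while it does not lie in the ideal generated by $J_{V/Y}=(h_{1}^{3},h_{2}^{3})$: that ideal annihilates $E$, whereas $h_{1}^{2}E\neq 0$ since $h_{1}^{2}\neq 0$ in $A^{*}(Z)$. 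So the condition you isolate in (ii) --- that $J_{(V\cap Z)/V}$ is generated by the image of $J_{Z/Y}$, equivalently $A^{*}(V\cap Z)\cong A^{*}(V)\otimes_{A^{*}(Y)}A^{*}(Z)$ --- is not a consequence of the other hypotheses but an independent input, without which even the conclusion of the lemma can fail; it must either be imposed as an additional hypothesis or verified for the particular subvarieties $X_{I}$ to which the lemma is applied in this paper (where it does hold). Your argument goes through once (ii) is granted, but the route you sketch for establishing (ii) cannot be repaired.
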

\begin{proof}
This is Lemma 5.4 in [FM].
\end{proof}

\begin{lemma}\label{kernel}
Suppose that $Z$ is the transversal intersection of nonsingular subvarieties $V$ and $W$ of $Y,$ and that the restrictions $A^*(Y) \rightarrow A^*(V)$ and $A^*(Y) \rightarrow A^*(W)$ are also surjective, 
let $\widetilde{V}=Bl_Z V.$ Then
\begin{enumerate}
\item $A^*(\widetilde{Y}) \rightarrow A^*(\widetilde{V})$ 
is surjective, with kernel $(J_{V/Y},P_{W/Y}(-E))$;
\item $A^*(\widetilde{Y})\rightarrow A^*(E \cap \widetilde{V})$ 
is surjective, with kernel $(J_{Z/Y},P_{W/Y}(-E)).$
\end{enumerate}
\end{lemma}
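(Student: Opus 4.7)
The plan is to apply Keel's formula (Lemma \ref{K}) to both blow-ups $\widetilde{Y} = Bl_Z Y$ and $\widetilde{V} = Bl_Z V$ and to compare the two presentations via the natural restriction maps on Chow rings. Three preliminary facts do most of the work. First, surjectivity of both $A^*(Y) \to A^*(V)$ and $A^*(Y) \to A^*(W)$ forces $J_{Z/V} = J_{Z/Y}/J_{V/Y}$ as ideals in $A^*(V)$. Second, by part (b) of the first lemma above, $P_{W/Y}(t)$ restricts coefficient-wise to a Chern polynomial $P_{Z/V}(t)$ for $Z \subset V$; in particular $P_{Z/V}(-E) \equiv P_{W/Y}(-E) \pmod{J_{V/Y}}$ in $A^*(Y)[E]$. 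Third, $P_{Z/Y}(t) = P_{V/Y}(t) \cdot P_{W/Y}(t)$, since $Z = V \cap W$ transversally. I would record these three identities first.

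For (1), Keel's formula applied to $\widetilde{V} \to V$ gives
$$A^*(\widetilde{V}) = \frac{A^*(V)[E_V]}{\bigl(J_{Z/V} \cdot E_V,\; P_{Z/V}(-E_V)\bigr)},$$
where $E_V = E|_{\widetilde{V}}$ is the exceptional divisor of $\widetilde{V} \to V$. The restriction map $A^*(\widetilde{Y}) \to A^*(\widetilde{V})$ is induced by $A^*(Y) \to A^*(V)$ together with $E \mapsto E_V$, and is manifestly surjective. Pulling the defining relations of $A^*(\widetilde{V})$ back along $A^*(Y)[E] \to A^*(V)[E_V]$ via the three identities identifies the kernel of the composite with the ideal $(J_{V/Y},\; J_{Z/Y} \cdot E,\; P_{W/Y}(-E))$ of $A^*(Y)[E]$. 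Descending to $A^*(\widetilde{Y})$, the middle generator $J_{Z/Y} \cdot E$ is already killed by Keel's defining relation, leaving the stated kernel $(J_{V/Y}, P_{W/Y}(-E))$.

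For (2), I identify $E \cap \widetilde{V}$ with the exceptional divisor $\mathbb{P}(N_{Z/V})$ of $\widetilde{V} \to V$ (using transversality to view $N_{Z/V}$ as a summand of $N_{Z/Y}$), so the projective bundle formula yields
$$A^*(E \cap \widetilde{V}) = \frac{A^*(Z)[\zeta_V]}{\bigl(P_{Z/V}(\zeta_V)\bigr)},$$
where $\zeta_V = -E|_{E \cap \widetilde{V}}$ is the hyperplane class. The composite $A^*(Y)[E] \to A^*(E \cap \widetilde{V})$ sends $a \mapsto a|_Z$ and $E \mapsto -\zeta_V$, is surjective, and a direct quotient computation (first modulo $J_{Z/Y}$, then modulo $P_{Z/V}(-E)$) shows its kernel to be $(J_{Z/Y},\, P_{Z/V}(-E))$. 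Using $P_{Z/V}(-E) \equiv P_{W/Y}(-E) \pmod{J_{V/Y}} \subset J_{Z/Y}$ together with the factorization $P_{Z/Y} = P_{V/Y} P_{W/Y}$, this ideal equals $(J_{Z/Y}, P_{W/Y}(-E))$ and contains the Keel relations defining $A^*(\widetilde{Y})$, so the descent gives the stated kernel in $A^*(\widetilde{Y})$.

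The principal obstacle is the bookkeeping among the four ideals $J_{V/Y}, J_{W/Y}, J_{Z/Y}, J_{Z/V}$ and the four Chern polynomials $P_{V/Y}, P_{W/Y}, P_{Z/Y}, P_{Z/V}$, and verifying their compatibilities under the various restriction maps. Once the three preliminary identities are cleanly in hand, both parts reduce to short quotient-ring manipulations within Keel's presentation.
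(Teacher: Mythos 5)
Your proposal is correct, but note that the paper itself offers no argument here: its ``proof'' is the single line ``This is Lemma 5.5 in [FM],'' deferring entirely to Fulton--MacPherson. What you have written is essentially the argument behind that citation: present both $A^*(\widetilde{Y})$ and $A^*(\widetilde{V})$ via Keel's formula (and $A^*(E\cap\widetilde{V})$ via the projective bundle formula), and track the ideals through the surjections $A^*(Y)\to A^*(V)\to A^*(Z)$ using the two facts from Lemma 5.1(b) of [FM], namely $P_{Z/Y}=P_{V/Y}\cdot P_{W/Y}$ and that $P_{W/Y}$ restricts to a Chern polynomial for $Z\subset V$. The ideal bookkeeping is right: the preimage in $A^*(Y)[E]$ of the relations defining $A^*(\widetilde{V})$ is $(J_{V/Y},\,J_{Z/Y}\cdot E,\,P_{W/Y}(-E))$, this contains $P_{Z/Y}(-E)=P_{V/Y}(-E)P_{W/Y}(-E)$, and the generator $J_{Z/Y}\cdot E$ dies in $A^*(\widetilde{Y})$, leaving exactly the stated kernel; part (2) is analogous. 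Two points are used silently and deserve a sentence each if you write this up: first, Keel's formula for $Bl_Z V$ (and for $Bl_Z Y$) requires the surjectivity of $A^*(Y)\to A^*(Z)$, which is the standing hypothesis signalled by the word ``also'' in the lemma's statement and does not follow from surjectivity onto $A^*(V)$ and $A^*(W)$ alone (it gives surjectivity of $A^*(V)\to A^*(Z)$ by composition); second, the geometric identifications $E|_{\widetilde{V}}=E_V$ (the exceptional divisor of $\widetilde{V}\to V$) and $E\cap\widetilde{V}\cong\mathbb{P}(N_{Z/V})$, with $\mathcal{O}(1)$ restricting compatibly, are what guarantee that your algebraically defined comparison map is the actual restriction homomorphism. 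With those made explicit, the proof is complete and buys the reader a self-contained account where the paper gives only a pointer.
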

\begin{proof}
This is Lemma 5.5 in [FM].
\end{proof}

Using the general results mentioned above we are able to express certain monomials that belong to the Chow ring $A^*(\widetilde{Y})$ in terms of elements in $A^*(Y):$ 

\begin{lemma}\label{num1}
Suppose that $Z$ is the transversal intersection $D_1\cap ... \cap D_r$ of divisor classes $D_1,...,D_r$ on $Y$ and let $f \in A^*(Y)$ be an element of degree $d=\dim(Z).$ 
The following relation holds in $A^*(Bl_Z Y):$ $$f.E^r=(-1)^{r-1}f.Z.$$\end{lemma}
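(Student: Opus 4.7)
The plan is to use Keel's formula (Lemma \ref{K}) to express $E^r$ as a linear combination of lower powers of $E$ with coefficients in $A^*(Y)$, and then to apply a degree argument to $f$ that annihilates all but one term after multiplication. Since $Z$ is the transverse intersection of the divisors $D_1,\ldots,D_r$, iterating the product formula for Chern polynomials (part (b) of the first lemma of this section) gives
$$P_{Z/Y}(t) = \prod_{i=1}^r (t + D_i).$$
Keel's relation $P_{Z/Y}(-E)=0$ then expands, in terms of the elementary symmetric polynomials $e_k = e_k(D_1,\ldots,D_r)$, as $\sum_{k=0}^r (-1)^k e_k E^{r-k} = 0$. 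Solving for $E^r$ and multiplying by $f$ yields
$$f\cdot E^r \;=\; \sum_{k=1}^r (-1)^{k-1}\, f\cdot e_k\cdot E^{r-k}.$$

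The top term $k=r$ contributes exactly $(-1)^{r-1} f\cdot Z$, because by transversality $e_r = D_1\cdots D_r = [Z]$. The proof therefore reduces to showing that every summand with $1\le k\le r-1$ vanishes in $A^*(\widetilde{Y})$.

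The crucial observation is a degree count. For $k\ge 1$, the class $f\cdot e_k$ lies in $A^{d+k}(Y)$ with $d+k > d = \dim Z$. Since $Z$ is smooth (as a transverse intersection of smooth divisors), $A^{>d}(Z) = 0$, so $f\cdot e_k$ restricts to zero on $Z$; equivalently $f\cdot e_k \in J_{Z/Y}$. Keel's presentation of $A^*(\widetilde{Y})$ forces $J_{Z/Y}\cdot E = 0$, hence $f\cdot e_k\cdot E = 0$, and therefore $f\cdot e_k\cdot E^{r-k} = 0$ whenever $r-k\ge 1$. Combining this with the $k=r$ computation delivers the desired identity.

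The main hurdle is conceptual rather than technical: one has to spot that the hypothesis $\deg f = \dim Z$ is precisely calibrated so that multiplication by any positive-codimension class pushes $f$ into $J_{Z/Y}$, at which point Keel's annihilation relation $J_{Z/Y}\cdot E = 0$ disposes of the intermediate terms. Once this is noticed, everything else is mechanical expansion of $\prod_i (D_i - E)$.
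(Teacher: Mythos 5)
Your proof is correct and follows essentially the same route as the paper: multiply the Keel relation $(D_1-E)\cdots(D_r-E)=0$ by $f$, observe that $f\cdot e_k$ for $k\ge 1$ has degree exceeding $\dim Z$ and hence lies in $J_{Z/Y}$, so all terms still containing a factor of $E$ are killed by $J_{Z/Y}\cdot E=0$, leaving only the $k=r$ term $(-1)^{r-1}f\cdot Z$. The only difference is that you spell out the elementary-symmetric-function expansion and the sign bookkeeping, which the paper leaves implicit.
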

\begin{proof}
Multiply both sides of the equality $$(D_1-E)...(D_r-E)=0$$ by $f.$ For any element $g \in A^*(Y)$ of positive degree, the pull back $i_Z^*(fg)$ of $fg$ along the inclusion 
$i_Z:Z \rightarrow Y$ is zero, which means that the product $fg.E$ is zero as well. This proves the claim.
\end{proof}

We also state the non-vanishing criteria of the product $E_I.E_J$ for a pair of exceptional divisors $E_I$ and $E_J:$

\begin{prop}
Let $I,J \subset \{1,...,n\}$ be subsets satisfying $|I|,|J| \leq n-3.$ The product $E_I.E_J \in A^2(\overline{U}_{n-1})$ is zero unless $I \subseteq J$ or $J \subseteq I$ or $I \cup J=\{1,...,n\}.$
\end{prop}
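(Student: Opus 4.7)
The strategy is to prove the stronger set-theoretic statement that $E_I \cap E_J = \emptyset$ as subvarieties of $\overline{U}_{n-1}$, which immediately yields $E_I \cdot E_J = 0$ in $A^2(\overline{U}_{n-1})$. Under the birational map $\overline{U}_{n-1} \to C^{n-1}$, the divisor $E_I$ maps into $X_I$ and $E_J$ into $X_J$, so the question reduces to analysing $X_I \cap X_J$ inside $C^{n-1}$ and tracking how the iterated blow-up separates the two sides. The hypothesis ``$I \not\subseteq J$, $J \not\subseteq I$, $I \cup J \neq \{1,\ldots,n\}$'' is precisely what is needed to force that separation to happen early.

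First I carry out a direct case analysis on whether $n \in I$ and/or $n \in J$, and show that in every case we have $X_I \cap X_J = X_{I \cap J}$ set-theoretically and that the intersection is \emph{clean}, i.e.\ $T_p X_I \cap T_p X_J = T_p X_{I \cap J}$ at every $p \in X_{I \cap J}$. When $n \notin I \cup J$, both $X_I$ and $X_J$ are vanishing loci of coordinate functions $x_i = O$ and the verification is immediate. When $n$ lies in one or both of $I, J$, the description of $X_I$ includes the ``equal along $K_I = \{1,\ldots,n\} \setminus I$'' condition; the hypothesis $I \cup J \neq \{1,\ldots,n\}$ supplies an index sitting in $K_I \cap K_J$ (or in $K_I \setminus J$), and the vanishing conditions coming from $X_J$ collapse the common value of the $K_I$-coordinates to $O$, recovering exactly the defining data of $X_{I \cap J}$.

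Next I invoke the standard principle that if smooth subvarieties $V, W$ of a smooth variety $Y$ meet cleanly along a smooth $Z$, then the proper transforms $\widetilde V, \widetilde W$ in $\mathrm{Bl}_Z Y$ are disjoint: on the exceptional divisor $\mathbb{P}(N_{Z/Y})$ they appear respectively as $\mathbb{P}(N_{Z/V})$ and $\mathbb{P}(N_{Z/W})$, and cleanness forces $N_{Z/V} \cap N_{Z/W} = 0$ inside $N_{Z/Y}$. Since $I,J$ are incomparable we have $|I \cap J| < \min(|I|,|J|) \leq n-3$, so $X_{I \cap J}$ is blown up at step $|I \cap J|$, strictly before either $X_I$ or $X_J$; after this step the proper transforms of $X_I$ and $X_J$ are already disjoint. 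An elementary lemma (away from the new centre nothing changes, and over the centre each proper transform meets the new exceptional divisor only in the projectivised normal bundle along its intersection with the centre) shows that disjointness of proper transforms is preserved by every subsequent blow-up. At step $|I|$ (resp.\ $|J|$) the exceptional divisor $\widehat E_I$ (resp.\ $\widehat E_J$) is a projective bundle over the proper transform of $X_I$ (resp.\ $X_J$); disjoint bases give disjoint projective bundles, and this disjointness propagates through the remaining blow-ups to the final space, yielding $E_I \cap E_J = \emptyset$.

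The main obstacle is the clean-intersection verification in the cases $n \in I$ or $n \in J$, where the hypothesis $I \cup J \neq \{1,\ldots,n\}$ enters essentially; and the bookkeeping for the simultaneous blow-up at each step, since the paper's construction blows up the entire regularly embedded union of proper transforms of $X_K$ with $|K| = k$ at once. This last point is harmless because at step $|I \cap J|$ the other components $X_K$ of the union have intersection with $X_{I \cap J}$ of the form $X_{K \cap (I \cap J)}$, already blown up in an earlier step, so the union is effectively blown up component by component insofar as its effect on the proper transforms of $X_I$ and $X_J$ is concerned.
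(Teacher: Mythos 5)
Your proposal is correct and follows essentially the same route as the paper: both identify $X_I\cap X_J=X_{I\cap J}$ under the hypothesis $I\cup J\neq\{1,\ldots,n\}$, note that $X_{I\cap J}$ is a proper subvariety of both when $I$ and $J$ are incomparable, and conclude that the proper transforms of $X_I$ and $X_J$ are separated once that of $X_{I\cap J}$ is blown up at the earlier step $|I\cap J|<\min(|I|,|J|)$, forcing $E_I\cap E_J=\emptyset$. You simply make explicit the clean-intersection verification and the propagation of disjointness through the subsequent blow-ups, which the paper leaves implicit.
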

\begin{proof}
If $I \cup J \neq \{1,...,n\},$ then $X_{I \cap J}$ is equal to the intersection $X_I \cap X_J,$ and it is a proper subset of $X_I$ and $X_J$ both if $I \nsubseteq J$ and $J \nsubseteq I.$ Under the assumption 
$I \nsubseteq J,J \nsubseteq I$ and $I \cup J \neq \{1,...,n\}$, the proper transforms of the subvarieties $X_I$ and $X_J$ become disjoint after blowing up along that of $X_{I \cap J}.$ 
This means that the product $E_I.E_J \in A^2(\overline{U}_{n-1})$ is zero.
\end{proof}

\section{\bf The tautological ring $R^*(C^n)$}
\begin{defn}
Suppose $(C;O)$ is a fixed pointed elliptic curve, and let $n \in \mathbb{N}$ be a natural number. The tautological ring, $R^*(C^n) \subset A^*(C^n)$, is defined to be the $\mathbb{Q}$-subalgebra generated by the following classes:
$$a_i=\{(x_1,...,x_n)\in C^n: x_i=O\}, \qquad d_{j,k}=\{(x_1,...,x_n)\in C^n: x_j=x_k\},$$ where $1 \leq i \leq n$ and $1 \leq j < k \leq n.$ If we define $b_{j,k}:=d_{j,k}-a_j-a_k,$ 
then another set of generators for $R^*(C^n)$ is $\{a_i,b_{j,k}: \ 1 \leq i \leq n \ \mathrm{and} \ \ 1 \leq j < k \leq n\}.$
\end{defn}

\begin{prop}
(A) The space of relations in $R^*(C^n)$ is generated by the following ones: $$a_i^2=0, \qquad a_ib_{i,j}=0,\qquad b_{i,j}b_{i,k}=a_ib_{j,k}, \qquad b_{i,j}b_{k,l}+b_{i,k}b_{j,l}+b_{i,l}b_{j,k}=0,$$
where in each relation the indices are distinct.

(B) For any $0 \leq d \leq n,$ the pairing $R^d(C^n) \times R^{n-d}(C^n) \rightarrow \mathbb{Q}$ is perfect.
\end{prop}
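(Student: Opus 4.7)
The plan is to prove (A) and (B) together: first, use the listed relations as rewriting rules to reduce $R^*(C^n)$ to a spanning set of standard monomials; second, compute the intersection pairing on this spanning set and verify it is non-degenerate. Non-degeneracy simultaneously establishes (B) and forces the spanning set to be a basis, so no other relations remain.

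I would first check each listed relation directly in Chow using $b_{i,j} = d_{i,j} - a_i - a_j$. The identities $a_i^2 = 0$ and $a_i b_{i,j} = 0$ are immediate ($[O]^2 = 0$ on $C$, together with the restriction $a_i d_{i,j} = a_i a_j$); the identity $b_{i,j} b_{i,k} = a_i b_{j,k}$ uses the same substitution together with $d_{i,j}^2 = 0$, which holds because $c_1(T_C) = 0$ on a genus-one curve; and the 4-term relation is the Pfaffian/Plücker identity reflecting the 2-dimensionality of $H^1(C)$ (so $\Lambda^3 H^1(C) = 0$ gives a three-term cancellation on the anti-symmetric Künneth piece of the diagonal class). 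The calculation $d_{i,j}^2 = 0$ also produces the identity $b_{i,j}^2 = -2 a_i a_j$, which I treat as an implicit consequence of the setup; for $n \geq 3$ the derivation $b_{i,j}^2 a_k = b_{i,j}(b_{i,k} b_{j,k}) = a_i b_{j,k}^2$ uses only the listed relations, though the final value $-2 a_i a_j$ still requires the separate geometric input $d_{i,j}^2 = 0$. Using these reductions, every class in $R^*(C^n)$ becomes a $\mathbb{Q}$-linear combination of \emph{standard monomials}
\[
a^A b^M := \prod_{i \in A} a_i \prod_{\{j,k\} \in M} b_{j,k}
\]
with $A \subset \{1, \ldots, n\}$ disjoint from the vertex set $V(M)$ of $M$, the matching $M$ chosen (modulo the 4-term relation) from a fixed set of representatives --- for example, non-crossing matchings with respect to a linear order on $\{1, \ldots, n\}$.

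For two standard monomials $m_1, m_2$ of complementary degrees, the product $m_1 m_2 \in R^n(C^n) = \mathbb{Q} \cdot a_1 \cdots a_n$ vanishes unless the $a$- and $b$-supports of the two monomials partition $\{1, \ldots, n\}$ without any collision between an $a$-index of one monomial and a $b$-vertex of the other. In the compatible case, iterated application of $b_{i,j} b_{i,k} = a_i b_{j,k}$ and $b_{i,j}^2 = -2 a_i a_j$ collapses the alternating union graph $M_1 \cup M_2$ cycle by cycle, yielding an overall factor $(-2)^{c(M_1, M_2)}$ where $c$ is the number of cycles. Up to a block decomposition indexed by the choice of $(A_1 \cup A_2, V(M_1) \cup V(M_2))$, the resulting Gram matrix on non-crossing matchings coincides with the Temperley--Lieb Gram matrix at loop weight $\delta = -2$. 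Since $\delta = -2$ is not of the form $\pm 2\cos(\pi/k)$ for any $k \geq 2$ --- the exceptional set at which $\mathrm{TL}_m$ fails to be semisimple, equivalently $U_m(-1) = (-1)^m(m+1) \neq 0$ for all $m$ --- this Gram matrix is non-degenerate. Non-degeneracy gives (B) immediately, and forces linear independence of the standard monomials in $R^*(C^n)$, ruling out any relations beyond those listed (and the derived $b_{i,j}^2 + 2 a_i a_j = 0$). This completes (A).

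The hard part is the combinatorial core of the pairing computation: non-degeneracy of the Gram matrix at $\delta = -2$. It rests on classical Temperley--Lieb semisimplicity, together with careful sign bookkeeping arising from permuting and re-indexing the $b_{j,k}$'s during the cycle collapse; this is the single step at which the argument is not purely formal.
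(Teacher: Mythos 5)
The combinatorial half of your argument (part (B)) is sound and takes a genuinely different route from the paper: you straighten the $b$-part onto non-crossing matchings and invoke non-degeneracy of the Temperley--Lieb/meander Gram matrix at loop weight $\delta=-2$ (via $U_m(-1)=(-1)^m(m+1)\neq 0$), whereas the paper keeps the full set of perfect matchings and quotes Hanlon--Wales for the eigenvalues of the Brauer-type Gram matrix $T_m(-2)$, identifying its kernel with the span of the three-term expressions. Your cycle-collapse formula $(-2)^{c(M_1,M_2)}$ is correct, and there are in fact no extra signs to track, since the Chow ring is commutative and the relation $b_{i,j}b_{i,k}=a_ib_{j,k}$ carries no sign. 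You are also right that $b_{i,j}^2=-2a_ia_j$ is not formally implied by the four listed relations; the paper leaves this implicit as well.

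The genuine gap is in your verification that the relations of part (A) actually hold in $R^*(C^n)\subset A^*(C^n)$. First, $b_{i,j}b_{i,k}=a_ib_{j,k}$ does \emph{not} follow from ``the same substitution together with $d_{i,j}^2=0$'': after expanding, it is equivalent to an evaluation of the class of the triple diagonal $d_{i,j}\cdot d_{i,k}=[\{x_i=x_j=x_k\}]$ in $A^2(C^3)$ in terms of products of the divisors $a_\bullet$ and $d_{\bullet,\bullet}$, a three-index statement that no two-index identity can produce; in the algebra on the $a$'s and $d$'s modulo $a_i^2=0$, $a_id_{i,j}=a_ia_j$, $d_{i,j}^2=0$, the monomial $d_{i,j}d_{i,k}$ cannot be rewritten at all. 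Second, your justification of the four-term relation via $\Lambda^3H^1(C)=0$ is an argument in cohomology, but $R^*(C^n)$ is defined inside the Chow ring, so the relation must be exhibited as a rational equivalence. This distinction is exactly the delicate point here: the paper's own remark cites Green--Griffiths to the effect that for a generic curve of genus at least $4$ the analogous ring $R^*(C^2)$ fails to be Gorenstein even though its cohomological image is. The paper supplies the missing Chow-level input by restricting Getzler's relation in $A^2(\overline{M}_{1,4})$ (established as a rational equivalence by Pandharipande) to $M_{1,4}^{ct}$, pulling it back along $F:\overline{U}_3\to M_{1,4}^{ct}$ (respectively along the forgetful map to $M_{1,4}^{ct}$ from $M_{1,5}^{ct}$ and then along $F$) and pushing down to $C^3$ (respectively $C^4$); some such genuinely genus-one, Chow-level argument is unavoidable and is absent from your proposal.
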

\begin{proof}
We first verify that the relations above hold in $R^2(C^n).$ The relations $a_i^2=a_ib_{i,j}=0$ obviously hold. E. Getzler proved in [G] that the following relation holds in $A^2(\overline{M}_{1,4}):$
\begin{equation} \tag{1}\label{G} 12 \delta_{2,2} - 4 \delta_{2,3} - 2\delta_{2,4} + 6\delta_{3,4}+\delta_{0,3}+\delta_{0,4}-2\delta_{\beta} = 0, \end{equation} where the classes above are defined in [G].

In [P], R. Pandharipande gives a direct construction of Getzler's relation via a rational equivalence in the Chow group $A_2(\overline{M}_{1,4})$.
If we restrict the relation \eqref{G} to the space $M_{1,4}^{ct},$ pull it back along the morphism $F:\overline{U}_3 \rightarrow M_{1,4}^{ct},$ 
and push it down to $C^3$ via the blow-down map, we get the relation \begin{equation}  \tag{2}\label{e1}12(a_1b_{2,3}-b_{1,2}b_{1,3})=0.\end{equation}
Next, we deal with the last relation. Denote by $\pi:M_{1,5}^{ct} \rightarrow M^{ct}_{1,4}$ 
the morphism which forgets the fifth marking. If we pull back the restriction of the relation \eqref{G} by the map $\pi$, 
its pull-back along $F:\overline{U}_4 \rightarrow M_{1,5}^{ct}$ gives a relation whose push-down by the blow-down map to $C^4$ 
becomes \begin{equation}  \tag{3}\label{e2} 12(b_{1,2}b_{3,4}+b_{1,3}b_{2,4}+b_{1,4}b_{2,3})=0.\end{equation}
For more details about the derivation of \eqref{e1} and \eqref{e2}, please see the appendix.

Now, we study the pairing $$R^d(C^n) \times R^{n-d}(C^n) \rightarrow \mathbb{Q}$$ for $0 \leq d \leq n.$ 
From the relations above, we see that the tautological group $R^d(C^n)$ is generated by monomials of the form 
$v=a(v).b(v),$ where $a(v)$ is a product $\prod a_i$ of $a_i$'s for $i \in A_v,$ and $b(v)$ is a product $\prod b_{j,k}$ of $b_{j,k}$'s, for $j,k \in B_v,$ 
such that $A_v$ and $B_v$ are disjoint subsets of the set $\{1,...,n\}$ satisfying $d=|A_v|+\frac{1}{2}|B_v|.$ Under this circumstance, 
the monomial $v$ is said to be standard. To any standard monomial $v$ we associate a dual element $v^* \in R^{n-d}(C^n),$ 
which is defined to be the product of all $a_i$'s, for $i \in \{1,...,n\}-A_v \cup B_v,$ with $b(v).$ The following lemma enables us to study the pairing:

\begin{lemma}
Let $v\in R^d(C^n)$ and $w \in R^{n-d}(C^n)$ be standard monomials. If the product $v.w$ is nonzero, then $B_v=B_w,$ 
and the disjoint union of the sets $A_v,A_w$ and $B_v=B_w$ is equal to the set $\{1,...,n\}.$
\end{lemma}
\begin{proof}
By assumption, we obtain the following inequalities: $$n=(|A_v|+\frac{1}{2}|B_v|)+(|A_w|+\frac{1}{2}|B_w|) \leq |A_v|+|A_w|+|B_v \cup B_w| \leq n.$$
This forces the inequalities to be equalities. The equality $$(|B_v \cup B_w|-|B_v|)+(|B_v \cup B_w|-|B_w|)=0$$ 
implies that $|B_v \cup B_w|=|B_v|=|B_w|,$ which shows that $B_v=B_w.$ The equality $|A_v|+|A_w|+|B_v|=n$ proves the second part of the claim.
\end{proof}
So, after a suitable enumeration of generators for $R^d(C^n),$ the resulting intersection matrix of the pairing between standard 
monomials and their dual consists of square blocks along the main diagonal and the off-diagonal blocks are all zero. To prove that the pairing is perfect we need to study the square blocks on the main diagonal. 
These matrices and their eigenvalues are studied in [HW]. In particular, from their result it follows that the kernel of any such matrix is generated by relations of the form \eqref{e2}:

\begin{lemma}
Let $m \geq 2$ be a natural number and $S$ be the set of all standard monomials $v$ of the form $b_{i_1,j_1}...b_{i_m,j_m}$ in $R^m(C^{2m}).$ The kernel of the intersection matrix $(v.w)$ for $v,w$ in $S$
is generated by expressions of the form$$R_{\{i,j,k,l\}}:=b_{i,j}b_{k,l}+b_{i,k}b_{j,l}+b_{i,l}b_{j,k},$$ where the indices are distinct elements varying over the set $\{1,...,2m\}.$ 
\end{lemma}
\begin{proof}
The intersection matrix $(v.w)$ for $v,w \in S$ in [HW] is denoted by $T_r(x)$ for $r=m$ and $x=-2.$ The $S_{2m}$-module generated by elements of $S$ decomposes into the sum $\oplus_{\lambda} V_{\lambda},$ 
where $\lambda$ varies over all partitions of $2m$ into even parts. For each such $\lambda$ the space $V_{\lambda}$ is an eigenspace of $T_r(x).$ 
The corresponding eigenvalue is zero  when $\lambda \neq 2^m$ and it is $(-1)^m(m+1)!$ when $\lambda = 2^m.$  We identify the space $V_{\lambda}$ 
with a subspace of $R^m(C^{2m})$, defined below, which is generated by expressions of the form $R_{\{i,j,k,l\}}$, for $\lambda \neq 2^m.$

Recall that a tabloid is an equivalence class of numberings of a Young diagram, two being equivalent if corresponding rows contain the same entries. 
The tabloid determined by a numbering $T$ is denoted $\{T\}.$ The space $V_{\lambda}$ is generated by elements of the form 
$$v_T=\sum_{q \in C(T)} \mathrm{sgn}(q)\{q.T\},$$ where $C(T)$ is the subgroup of $S_{2m}$ consisting of permutations preserving the columns of $T.$ 

Note that the sum $R_{\{1,...,2m\}}:=\sum_{v \in S}v$ is a symmetric expression, which is clearly  a linear combination of terms of the form $R_{T},$ 
where $|T|=4.$ This proves the claim when $\lambda=2m$ gives the trivial representation. For other partitions $\lambda$ we use the proven result for the symmetric relations. 
Let $\lambda=(\lambda_1 \geq ... \geq \lambda_r > 0)$ be a partition of $2m.$ For each numbering of a Young diagram $T$ let $T_i$ denote the subset of $\{1,...,2m\}$ 
containing elements of the $i^{th}$ row of $T,$ for $i=1,...,r.$  Consider the product $P_T:=\prod_{i=1}^r R_{T_i},$ where $R_{\{i,j\}}$ 
is defined to be $b_{i,j},$ while the other $R_{T_i}$'s are defined above  when $|T_i| \geq 4.$ Note that $P_T$ doesn't change as $T$ varies in an equivalence class 
$\{T\}$ since $R_{T_i}$'s are symmetric. This means that the assignment $$v_T \rightarrow \sum_{q \in C(T)} \mathrm{sgn}(q)P_{q.T},$$ 
is a well-defined $S_{2m}$-module morphism. This map is non-zero, hence an isomorphism onto its image. The result follows.
\end{proof}

Since the relations of the form \eqref{e2}  hold in the tautological ring $R^*(C^n),$ we conclude that the pairing is perfect. 
This also shows that the relations stated in the proposition generate all relations in the tautological ring.
\end{proof}

\begin{rem}
The tautological ring $R^*(C^n),$ for a smooth curve $C$ of genus $g,$ was defined by C. Faber and R. Pandharipande  (unpublished) as the $\mathbb{Q}$-subalgebra 
of $A^*(C^n)$ generated by the standard classes $K_i$ and $D_{i,j}.$ They show that the image $RH^*(C^n)$ in cohomology is Gorenstein. 
In [GG] M. Green and P. Griffiths have shown that $R^*(C^2)$ is not Gorenstein, for $C$ a generic complex curve of genus $g \geq 4.$  
\end{rem}

\section{\bf The tautological ring $R^*(\overline{U}_{n-1})$}

\begin{defn}
Let $Y$ be a blow-up of $C^{n-1}$ at some step in the construction of $\overline{U}_{n-1},$ and denote by $\mathcal{S}_Y$ the collection of subsets of the set $\{1,...,n\}$ 
corresponding to the involved blow-up centers. The tautological ring $R^*(Y)$ of $Y$ is defined to be the subalgebra of the Chow ring $A^*(Y)$ generated by the tautological classes in $R^*(C^{n-1})$ 
and the classes $E_I,$ for $I$ in $\mathcal{S}_Y.$ In particular, $R^*(\overline{U}_{n-1})$ is generated over $R^*(C^{n-1})$ by all $E_I$'s, where $|I| \leq n-3.$
\end{defn}

\subsection{Relations in $R^*(\overline{U}_{n-1})$}

\begin{itemize}\label{R}
\item
For subsets $I,J \subset \{1,...,n\},$ where $|I|,|J| \leq n-3$, the product $E_I.E_J \in R^2(\overline{U}_{n-1})$ is zero unless $$* \qquad I \subseteq J, \qquad  \mathrm{or} \ J \subseteq I, \qquad \mathrm{or} \ I \cup J=\{1,...,n\} .$$

\item For any subset $I \subset \{1,...,n\},$ where $|I| \leq n-3,$ consider the inclusion $$i_I:X_I \rightarrow C^{n-1}.$$ 
The relations $$\{x.E_I=0 : x \in \ker(i_I^*:R^*(C^{n-1}) \rightarrow R^*(X_I))\}$$ hold. Note that the kernel of $i_I^*$ coincides with the kernel of the map 
$$R^*(C^{n-1}) \rightarrow R^*(C^{n-1})$$ defined by $x \rightarrow x.X_I.$ This follows since $(i_I)_* (i_I)^*(x)=x.X_I$ for $x \in R^*(C^{n-1})$, and $(i_I)_*$ 
is injective in our case.

\item As we saw in the third section, in blowing-up the variety $Y$ along a subvariety $Z \subset Y,$ if the center $Z$ 
can be written as the transversal intersection of the subvarieties $V$ and $W$ of $Y,$ then the class $P_{W/Y}(-E_Z)$ is in the ideal $J_{\widetilde{V}/\widetilde{Y}}.$ 
This means that the product $P_{W/Y}(-E_Z).E_{\widetilde{V}}$ is zero, where $E_{\widetilde{V}}$ is the class of the exceptional divisor of the blow-up along the subvariety $\widetilde{V}.$ 
We get a class of relations of this type by writing the centers of blow-ups introduced in the construction of the space $\overline{U}_{n-1}$ as  transversal intersections in different ways. 
If the subvariety $V$ can be written as a transversal intersection $V_1 \cap ... \cap V_k,$ we obtain the relation $P_{W/Y}(-E_Z).E_{V_1}...E_{V_k}=0.$

\item For each subvariety $Z \subset Y$ with a Chern polynomial $P_{Z/Y}(t),$ there is a relation $$P_{Z/Y}(-E_Z)=0,$$ where $E_Z$ is the class of the exceptional divisor of the blow-up of 
$Y$ along $Z.$ These give another class of relations in $R^*(\overline{U}_{n-1}).$ Note that for each subset $I$ of $\{1,...,n\},$ a Chern polynomial $P_{X_I/C^{n-1}}$ of the subvariety $X_I$
is in $R^*(C^{n-1})[t].$ This means that a Chern polynomial of its proper transform under later blow-ups belongs to $R^*(\overline{U}_{n-1}).$ It follows from Lemma ~\ref{proper}, 
which relates a Chern polynomial $P_{V/Y}(t)$ of a subvariety $V$ to that of its proper transform $\widetilde{V}.$
\end{itemize}

\begin{ex} Suppose $Y=C^5.$
\begin{itemize}
\item Let $X_0$ be the point $O^5 \in C^5.$ Then $\ker(i^*:R^*(C^{5}) \rightarrow R^*(X_0))$ consists of all elements of positive degree. 
It follows that $a_i.E_0=b_{i,j}.E_0=0$ for all $i$ and $j.$
\item Let $X_1=a_2a_3a_4a_5.$ From $a_1 \cap X_1=X_0$ we get the relation $(a_1-E_0)E_1=0.$ If $X_{1,2,3}=a_4a_5$ and $X_{4,5,6}=d_{1,2}d_{1,3}$, 
then the relation $(a_1-E_0)E_{1,2,3}E_{4,5,6}=0$ follows from the equality $a_1 \cap X_{1,2,3} \cap X_{4,5,6}=X_0.$
\item A Chern polynomial of the subvariety $X_0$ is $\prod_{i=1}^5(a_i+t)$, from which we get the following relation: $$\prod_{i=1}^5(a_i-E_0)=a_1a_2a_3a_4a_5-E_0^5=0.$$
\end{itemize}
\end{ex}
There are few special cases of the relations above which will be useful in the definition of standard monomials and in defining the dual elements:

\begin{lemma}\label{D}
Let $I$ be a subset of the set $\{1,...,n\}$ with at most $n-3$ elements, containing $n.$ For any $i \in I$  and $j,k \in \{1,...,n\}-I,$ the following relations hold in $A^2(\overline{U}_{n-1}):$ 
$$a_j.E_I=a_k.E_I, \qquad b_{j,k}.E_I=-2a_j.E_I, \qquad b_{i,k}.E_I=(\sum_{J \subseteq I-\{i\}}E_J-a_i-a_j).E_I.$$
\end{lemma}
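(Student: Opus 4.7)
The plan is to dispatch the three relations in order, using the kernel relation $x \cdot E_I = 0$ for $x \in \ker(i_I^*)$ from the second bullet of the relations list in $R^*(\overline{U}_{n-1})$ for the first two, and a blow-up pullback identity for the third. I assume $i \neq n$ in the third relation so that $a_i$ and $b_{i,k}$ are defined in $R^*(C^{n-1})$.

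For the first relation, since $n \in I$ and $j,k \in \{1,\ldots,n\}-I \subseteq \{1,\ldots,n-1\}$, the defining equations of $X_I$ force $x_j = x_k$ on $X_I$, hence $i_I^*(a_j - a_k) = 0$ and $a_j \cdot E_I = a_k \cdot E_I$. For the second relation, $X_I$ is contained in the diagonal $d_{j,k} = \{x_j = x_k\}$, so $i_I^*(d_{j,k})$ is the restriction of $c_1(N_{d_{j,k}/C^{n-1}})$ to $X_I$. This normal bundle is pulled back from the tangent bundle $T_C$, which has trivial first Chern class on the elliptic curve $C$; therefore $d_{j,k} \cdot E_I = 0$. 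Writing $b_{j,k} = d_{j,k} - a_j - a_k$ and applying the first relation then gives $b_{j,k} \cdot E_I = -2 a_j \cdot E_I$.

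For the third relation, let $\pi : \overline{U}_{n-1} \to C^{n-1}$ denote the composition of all blow-ups. The centers $X_J$ contained in $d_{i,k}$ are precisely those with $i, k \notin J$, so iterating the identity $\pi^*D = \widetilde{D} + E$ for the pullback of a smooth divisor containing a blow-up center yields
$$\pi^*(d_{i,k}) = \widetilde{d_{i,k}} + \sum_{\substack{i, k \notin J \\ |J| \leq n-3}} E_J,$$
where $\widetilde{d_{i,k}}$ is the proper transform. Multiplying by $E_I$ and applying the non-vanishing criterion for $E_J \cdot E_I$ at the end of Section 3 kills most terms: $i \notin J$ rules out $I \subseteq J$, and $k \notin J \cup I$ rules out $J \cup I = \{1,\ldots,n\}$, leaving only the sum over $J \subseteq I - \{i\}$.

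It remains to show $\widetilde{d_{i,k}} \cdot E_I = 0$. The subvarieties $d_{i,k}$ and $X_I$ intersect transversally in $C^{n-1}$ along $X_{I - \{i\}}$, as one checks set-theoretically from the defining equations together with the dimension count $\dim d_{i,k} + \dim X_I - \dim C^{n-1} = |I| - 1 = \dim X_{I - \{i\}}$. Since $X_{I - \{i\}}$ is blown up at step $|I| - 1$, strictly before $X_I$ is blown up at step $|I|$, and blowing up the transversal intersection of two smooth subvarieties separates their proper transforms, $\widetilde{d_{i,k}}$ and $\widetilde{X_I}$ are disjoint from step $|I| - 1$ onward; since $E_I$ is supported over $\widetilde{X_I}$, this forces $\widetilde{d_{i,k}} \cdot E_I = 0$. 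Combined with $b_{i,k} = d_{i,k} - a_i - a_k$ and the first relation, this delivers the claim. The main obstacle is this last disjointness step, which requires careful tracking of proper transforms through the intermediate blow-ups and scheme-theoretic verification of the transversal intersection.
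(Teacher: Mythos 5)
Your treatment of the first two relations matches the paper's: both rest on the kernel relations $x\cdot E_I=0$ for $x\in\ker(i_I^*)$, and your normal-bundle justification for $d_{j,k}\cdot E_I=0$ (triviality of $T_C$, hence of $N_{d_{j,k}/C^{n-1}}$) is a valid substitute for the reason the paper leaves implicit, namely that $d_{j,k}\cdot X_I=0$ already in $A^*(C^{n-1})$ because $X_I$ is an intersection of diagonals one of which is $d_{j,k}$ and $d_{j,k}^2=0$ there. For the third relation the paper argues differently: from $X_I\cap d_{i,k}=X_{I-\{i\}}$ it reads off the identity $(d_{i,k}-\sum_{J\subseteq I-\{i\}}E_J)\cdot E_I=0$ directly from the Fulton--MacPherson/Keel machinery of Section 3 (a Chern polynomial of the proper transform of $d_{i,k}$ lies in the ideal $J_{\widetilde{X}_I/\widetilde{Y}}$; cf.\ Lemmas \ref{proper} and \ref{kernel} and the third class of relations in \ref{R}), whereas you reach the same identity by expanding $\pi^*d_{i,k}=\widetilde{d_{i,k}}+\sum_{i,k\notin J}E_J$, pruning the sum with the $E_J\cdot E_I$ vanishing criterion, and showing $\widetilde{d_{i,k}}\cdot E_I=0$ by disjointness of proper transforms. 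The two routes are equivalent in substance --- your disjointness claim is exactly the geometric content of $P_{d_{i,k}/Y}(-E)\in J_{\widetilde{X}_I/\widetilde{Y}}$ --- but the step you flag as the ``main obstacle'' (verifying that $\widetilde{d_{i,k}}$ and $\widetilde{X}_I$ still meet transversally along $\widetilde{X}_{I-\{i\}}$ at the moment that center is blown up, despite the earlier and simultaneous blow-ups) is precisely what the quoted [FM] lemmas are designed to dispense with, so invoking them would close that residual gap more cleanly than a bare-hands tangent-space argument. Your final substitution of $a_j\cdot E_I$ for $a_k\cdot E_I$ via the first relation, and the standing assumption $i\neq n$ (forced anyway, since $a_i$ and $b_{i,k}$ only exist for $i\leq n-1$), agree with the paper.
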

\begin{proof}
Recall that $E_I$ is the exceptional divisor of the blow-up along the proper transform of the subvariety $$X_I=\cap_{j \neq r \in \{1,...,n\}-I}  d_{j,r}=\cap_{k \neq r \in \{1,...,n\}-I}d_{k,r}.$$
The equality $a_j.E_I=a_k.E_I$ follows since $$a_j-a_k \in \ker(i_I^*:R^*(C^{n-1}) \rightarrow R^*(X_I)),$$ where $i_I:X_I \rightarrow C^{n-1}$ denotes the inclusion.

We give another proof as well: from the following equality $$X_I \cap a_j = X_I \cap a_k = X_{I-\{n\}},$$ we obtain the relation$$(a_j-\sum_{J \subseteq I-\{n\}}E_J)E_I=(a_k-\sum_{J \subseteq I-\{n\}}E_J)E_I=0.$$
This gives the first relation after canceling out $(\sum_{J \subseteq I-\{n\}}E_J)E_I$ on both sides.

The second equality results from the definition $b_{j,k}=d_{j,k}-a_j-a_k,$ the relation $d_{j,k}.E_I=0,$ and from the previous case.

To prove the last statement, first note that $b_{i,k}=d_{i,k}-a_i-a_k,$ by definition. From the equality $X_I \cap d_{i,k}=X_{I-\{i\}},$ we get the relation $$(d_{i,k}-\sum_{J \subseteq I-\{i\}}E_J).E_I=0.$$ 
We conclude that $$b_{i,k}.E_I=(\sum_{J \subseteq I-\{i\}}E_J-a_i-a_k).E_I,$$ which proves the last statement, using that $a_j.E_I=a_k.E_I.$
\end{proof}

\subsection{Standard monomials} 
The existence of the relations stated above makes it possible to obtain a smaller set of generators for the tautological ring $R^*(\overline{U}_{n-1}).$ 
Any monomial $v \in R^d(\overline{U}_{n-1})$ can be written as a product $a(v)b(v)E(v),$ where $a(v)$ is a product of $a_i$'s, $b(v)$ is a product of $b_{j,k}$'s, and $E(v)$ 
is a product of exceptional divisors. To simplify the enumeration of the generators for $R^*(\overline{U}_{n-1}),$ we introduce the directed graph associated to a monomial:

\begin{defn}\label{Graph}
Let $v=a(v)b(v)E_{I_1}^{i_1}...E_{I_m}^{i_m}$, where  $i_r \neq 0$ for  $r=1,...,m$ and $I_1 <...< I_m$, be a monomial. 
The directed graph $\mathcal{G}=(V_\mathcal{G},E_\mathcal{G})$ associated to $v$ is defined by the following data:
\begin{itemize}
\item A set $V_\mathcal{G}$ and a one-to-one correspondence between members of $V_{\mathcal{G}}$ and members of the set $\{1,...,m\}.$ Elements of $V_\mathcal{G}$ are called the vertices of $\mathcal{G}.$
\item  A set $E_\mathcal{G} \subset V_\mathcal{G} \times V_\mathcal{G}$ consisting of all pairs $(r,s),$ where $I_s$ is a minimal element of the set $$\{I_i: I_r \subset I_i\}$$ 
with respect to inclusion. Elements of $E_\mathcal{G}$ are called the edges of $\mathcal{G}.$
\end{itemize}

For a vertex $i \in V_\mathcal{G},$ the closure $\overline{i} \subset V_\mathcal{G}$ is defined to be the subset
$$\{r \in V_\mathcal{G}: I_i \subseteq I_r\}$$ of $V_\mathcal{G}.$ The degree $\deg(i)$ of $i$ is defined to be the number of the elements of the set 
$$\{j \in V_\mathcal{G}: (i,j) \in E_\mathcal{G}\}.$$

A vertex $i \in V_\mathcal{G}$ is called a \emph{root} of $\mathcal{G}$ if $I_i$ is minimal with respect to inclusion of sets. 
Maximal vertices of $\mathcal{G}$ are called \emph{external} and all the other vertices will be called \emph{internal}.

In the following, we use the letters $I_1,...,I_m$ to denote the vertices of $\mathcal{G}.$
\end{defn}

\begin{rem}
We can define a directed graph associated to any collection of subsets of the set $\{1,...,n\}$ in a similar way. In general there may be a loop in the resulting graph after forgetting the directions. 
Loops don't occur when the collection consists of only proper subsets $I$ of the set $\{1,...,n\}$ such that for any two distinct members $I,J$ of the collection one of the conditions in $*$ holds. 
Hence, we refer to $\mathcal{G}$ as the associated \emph{forest} of the monomial $v,$ or of the collection $\{I_1,...,I_m\}.$
\end{rem}

The next lemma turns out to be useful in defining the dual element:
\begin{lemma}
Suppose that $I_1,...,I_m$ are proper subsets of the set $\{1,...,n\},$ containing at most $n-3$ elements, with the property that each pair $I_r$ and $I_s$ satisfy $*.$
Let $\mathcal{G}$ be the associated forest. If $n \notin \cap_{r=1}^m I_r,$ then there is a unique root of $\mathcal{G}$ not containing $n.$
\end{lemma}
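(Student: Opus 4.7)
The plan is to prove existence and uniqueness separately, and in both steps to exploit the trichotomy given by condition $*$ together with the fact that roots are the minimal vertices with respect to inclusion.

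For existence, I would start from the hypothesis $n \notin \bigcap_{r=1}^m I_r$, which guarantees that the subcollection $\mathcal{C}=\{I_r : n \notin I_r\}$ is nonempty. Pick an element $I_r \in \mathcal{C}$ that is minimal in $\mathcal{C}$ with respect to inclusion; I claim $I_r$ is actually a root of the whole forest $\mathcal{G}$. Indeed, if some $I_s$ from the full collection were strictly contained in $I_r$, then $I_s \subsetneq I_r$ and $n \notin I_r$ would force $n \notin I_s$, so $I_s \in \mathcal{C}$, contradicting the minimality of $I_r$ in $\mathcal{C}$. Hence $I_r$ is a root not containing $n$.

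For uniqueness, suppose $I_r$ and $I_s$ are two distinct roots with $n \notin I_r$ and $n \notin I_s$. Since they belong to the collection, condition $*$ applies: either $I_r \subseteq I_s$, or $I_s \subseteq I_r$, or $I_r \cup I_s = \{1,\dots,n\}$. Each of the first two options would contradict the fact that both are roots (minimal elements) together with $I_r \neq I_s$. Hence $I_r \cup I_s = \{1,\dots,n\}$, but then $n \in I_r \cup I_s$, forcing $n$ into $I_r$ or $I_s$, a contradiction.

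The argument is essentially a poset exercise; the only place where anything nontrivial is used is the uniqueness step, where condition $*$ is essential to rule out two incomparable roots avoiding $n$. I do not expect a serious obstacle: once the trichotomy is invoked, uniqueness is immediate, and the minimization argument for existence is standard. The one subtlety to keep in mind while writing is that ``root'' here means minimal (not maximal) with respect to inclusion, as specified in Definition~\ref{Graph}, so one must be careful to use the right direction of the partial order throughout.
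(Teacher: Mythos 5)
Your proof is correct and follows essentially the same route as the paper: uniqueness via the trichotomy $*$ forcing two distinct roots to have union $\{1,\dots,n\}$, hence $n$ lies in at least one of them. Your existence step (taking a minimal element among the $I_r$ avoiding $n$ and checking it is a root of the whole forest) spells out a detail the paper simply asserts, but the argument is the same in substance.
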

\begin{proof}
By assumption, there is a root $I_r$ such that $n \notin I_r.$ Uniqueness follows since for any two roots $I_r,I_s$ of $\mathcal{G}$ the equality $I_r \cup I_s = \{1,...,n\}$ holds. This means that their complements 
$I_r^c,I_s^c$ are disjoint. Hence, $n$ belongs to the complement of at most one root.
\end{proof}

\begin{defn}\label{standard}
Let $v$ be as in Definition \ref{Graph}, $\mathcal{G}$ be the associated forrest, and $J_1,...,J_s,$ for some $s \leq m,$ be roots of $\mathcal{G}.$ For each $1 \leq r \leq s$ such that $n \in J_r,$ let $j_r \in \{1,...,n\}-J_r$ 
be the smallest element. The subset $S$ of the set $\{1,...,n-1\}$ is defined as follows:
\begin{itemize}
\item
If $n \in \cap_{r=1}^m I_r,$ put $$S:=\{j_1,...,j_s\} \cup (\cap_{r=1}^m I_r - \{n\}),$$
\item
if $n \notin \cap_{r=1}^m I_r,$ let $J_1$ be the unique root of $\mathcal{G}$ not containing $n.$ In this case$$S:=\{j_2,...,j_s\} \cup (\cap_{r=1}^m I_r).$$
\end{itemize}

The monomial $v$ is said to be standard if
\begin{itemize}
\item The monomial $a(v)b(v)\in R^*(C^S)$ is in standard form according to the definition given in the forth section.
\item For each $r$ we have that $$i_r  \leq \min(n-2-|I_r|, |\cap_{I_r \subset I_s} I_s|-|I_r|+\deg(I_r)-2).$$
\end{itemize}
\end{defn}

To prove that the standard monomials generate the tautological ring, we define an ordering on the \emph{polynomial ring} $$R:=\mathbb{Q}[a_i,b_{j,k},E_I: 1 \leq i \leq n-1,1 \leq j<k \leq n-1,I \subset \{1,...,n\}, \ \mathrm{where} \ |I|\leq n-3].$$

\begin{defn}\label{<}
Let $I,J \subset \{1,...,n\},$ we say that $I<J$ if
\begin{itemize}
\item $|I|<|J|$

\item or, if $|I|=|J|$ and the smallest element in $I-I \cap J$ is less than the smallest element of $J-I\cap J.$
\end{itemize}

Put an arbitrary total order on monomials in $$\mathbb{Q}[a_i,b_{j,k}:1 \leq i \leq n-1,1 \leq j < k \leq n-1].$$

Suppose $v_1,v_2 \in R$ are monomials. We say that $v_1 < v_2$ if we can write them as $$v_1=a(v_1)b(v_1)\prod_{r=1}^{r_0}E_{I_r}^{i_r}.E, \qquad \ \mathrm{and} \ v_2=a(v_2)b(v_2)\prod_{r=1}^{r_0}E_{I_r}^{j_r}.E ,$$ 
where $E=\prod_{r=r_0+1}^m E_{I_r}^{i_r}$, for $I_1 < ... <I_m,$ and $i_{r_0}<j_{r_0};$ 

or, if $r_0=0$ and $a(v_1)b(v_1)<a(v_2)b(v_2).$ 

Furthermore, we say that  $v_1 \ll v_2,$ if for any factor $E_I$ of $v_2$ we have that $v_1 <E_I.$ Note that $v_1 \ll v_2$ implies that $v_1 <v_2.$
\end{defn}

\begin{prop}
The tautological ring $R^*(\overline{U}_{n-1})$ of $\overline{U}_{n-1}$ is generated by standard monomials.
\end{prop}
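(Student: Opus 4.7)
The plan is to prove the statement by strong induction on the total ordering on monomials introduced in Definition~\ref{<}: every monomial of $R^*(\overline{U}_{n-1})$ is congruent, modulo the relations in Subsection~\ref{R}, to a $\mathbb{Q}$-linear combination of standard monomials plus monomials that are strictly smaller. Since the graded pieces of $R^*(\overline{U}_{n-1})$ are finite dimensional, this well-founded induction terminates and delivers the conclusion. Fix then a monomial $v = a(v)b(v)E_{I_1}^{i_1}\cdots E_{I_m}^{i_m}$. As a first step, if some pair $I_r, I_s$ fails the condition $*$, then $E_{I_r}E_{I_s} = 0$ by the first bullet of Subsection~\ref{R}, so $v = 0$; hence we may assume that $\{I_1,\ldots,I_m\}$ satisfies $*$ pairwise, so the associated graph $\mathcal{G}$ of Definition~\ref{Graph} is a forest, and this determines the distinguished index set $S \subset \{1,\ldots,n-1\}$ of Definition~\ref{standard}.

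Next I would normalise the $a$-$b$ factor so that its indices lie in $S$. Lemma~\ref{D}, together with its analogue in the case $n \notin I_r$ supplied by the kernel clause of Subsection~\ref{R}, produces rewriting rules that express any product $a_i\cdot E_{I_r}$ or $b_{i,k}\cdot E_{I_r}$ involving a ``wrong'' index as a $\mathbb{Q}$-linear combination of terms whose $a$-$b$ factor uses only indices permitted by $S$, at the cost of introducing extra exceptional factors $E_J$ with $J \subsetneq I_r$. Applied iteratively to every root $I_r$ of $\mathcal{G}$, this purges all forbidden indices from $a(v)b(v)$, which is then reduced to standard form over the alphabet of $S$ by the relations of Proposition~4.2, lifted from $R^*(C^S)$ to $R^*(\overline{U}_{n-1})$ using once more the kernel clause.

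Then I would enforce the numerical bounds on each exponent $i_r$. If $i_r > n-2-|I_r|$, the Chern polynomial relation $P_{\widetilde{X}_{I_r}}(-E_{I_r}) = 0$ of Lemma~\ref{K} (with proper transforms handled by Lemma~\ref{proper}) expresses $E_{I_r}^{i_r}$ as a combination of lower powers of $E_{I_r}$ multiplied by pullbacks from $C^{n-1}$. If instead $i_r$ exceeds the second bound $|\cap_{I_r \subset I_s} I_s| - |I_r| + \deg(I_r) - 2$, one writes the corresponding blow-up centre as a transversal intersection of divisors reflecting the descendants of $I_r$ in $\mathcal{G}$ and invokes the relations $P_{W/Y}(-E_Z) \cdot E_{\widetilde{V}_1}\cdots E_{\widetilde{V}_k} = 0$ from the third bullet of Subsection~\ref{R}. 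In each case the output strictly decreases $i_r$.

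The main obstacle I anticipate is the bookkeeping required to show that every rewriting step strictly decreases its input in the ordering $<$: applications of Lemma~\ref{D} introduce new small exceptional factors $E_J$, applications of the Chern polynomial relations trade a high power of $E_{I_r}$ for lower powers together with new $a$-$b$ monomials, and these two families of substitutions must be carefully interleaved. The subtle point is that Definition~\ref{<} is designed so that such exchanges do produce strictly smaller terms, with the auxiliary notion $\ll$ controlling precisely those terms in which a Chern polynomial reduction creates a fresh $a$-$b$ factor; verifying this compatibility is where the argument has content.
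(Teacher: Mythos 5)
Your proposal follows essentially the same route as the paper: reduce $a(v)b(v)$ to standard form over $C^S$ via Lemma~5.4, then induct on the ordering of Definition~5.9, using the Chern polynomial relation $P_{X_{I_r}/C^{n-1}}(-E)=0$ to enforce the bound $i_r\leq n-2-|I_r|$ and the transversal-intersection relations $P_{W/Y}(-E_Z)E_{V_1}\cdots E_{V_k}=0$ to enforce the second bound. The bookkeeping issue you flag (that each substitution produces strictly smaller terms) is real but is also left implicit in the paper's own two-line verification.
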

\begin{proof}
Let $v$ be a monomial given as in Definition \ref{Graph}. We have seen that any monomial in $R^d(C^{n-1})$ can be written in standard form for $0 \leq d \leq n-1.$ 
By Lemma \ref{D}, we may assume that $a(v)b(v)$ is an element of the tautological ring $R^*(C^S)$ of $C^S,$ where $S$ is defined according to the Definition \ref{standard}. 
The statement is proven using induction and from the following observations:

\begin{itemize}
\item From the last class of relations in \ref{R}, for any subset $I_r$ of the set $\{1,...,n\},$ where $|I_r|\leq n-3,$ we can write $E_{I_r}^{n-1-|I_r|}$ 
as a sum of elements which are strictly less than it.
\item Let $\{J_1,...,J_s\}$ be the set of minimal elements of the set $$\{I_i: I_r \subset I_i, \  \mathrm{where} \ 1 \leq i \leq m \}$$ 
From the third class of relations in \ref{R}, the monomial $E_{I_r}^{j} \prod_{i=1}^s E_{J_i}$ 
can be written as a sum of terms which are strictly less than it, where $j=|\cap_{i=1}^s J_i|-|I_r|+s-1.$
\end{itemize}
\end{proof}

\subsection{Definition of the dual element}

Now suppose $v\in R^d(\overline{U}_{n-1})$ is an element of degree $d$ written in standard form. Below, we define the element $v^*,$ which is an element of $R^{n-1-d}(\overline{U}_{n-1}).$ 
As we will see, the property $v^{**}=v$ holds. This shows that there is a one-to-one correspondence between standard monomials in degree $d$ and $n-1-d.$

\begin{defn}\label{dual}
Suppose $v=a(v)b(v)E(v)$ is a standard monomial, where $a(v)b(v)$ is in the tautological ring $R^*(C^{n-1})$ of $C^{n-1},$ and $$E(v)=\prod_{r=1}^m E_{I_r}^{i_r},$$ 
where $i_r \neq 0$ for $r=1,...,m,$ and $I_1 < ...<I_m.$ Let $\mathcal{G}$ be the associated forest, and $J_1,...,J_s,$ for some $s \leq m,$ be the roots of $\mathcal{G}.$ For each $1 \leq r \leq s$ such that $n \in J_r,$ 
let $j_r \in \{1,...,n\}-J_r$ be the smallest element. The subset $T$ of the set $\{1,...,n-1\}$ is defined as follows:
\begin{itemize}
\item
If $n \in \cap_{r=1}^m I_r$ put $$T:=\{j_1,...,j_s\} \cup (\cap_{r=1}^m I_r) - (A_v\cup B_v \cup \{n\}),$$
\item
if $n \notin \cap_{r=1}^m I_r,$ let $J_1$ be the unique root of $\mathcal{G}$ not containing $n.$ In this case $$T:=\{j_2,...,j_s\} \cup (\cap_{r=1}^m I_r) - (A_v\cup B_v).$$
\end{itemize}

For each $1 \leq r \leq m,$ define $j_r$ to be $$ \left\{ \begin{array}{ll}
|\cap_{I_r \subset I_s}I_s|-|I_r|+\deg(I_r)-1-i_r & I_r \ \mathrm{is \ an \ internal \ vertex \ of} \ \mathcal{G} \\ \\
n-1-|I_r|-i_r & I_r \ \mathrm{is \ an \ external \ vertex \ of} \ \mathcal{G}.  \\
\end{array} \right. $$

We define $v^*=a(v^*)b(v^*)E(v^*),$ where  $$a(v^*)=\prod_{i \in T}a_i,  \qquad b(v^*)=b(v),\qquad E(v^*)=\prod_{r=1}^m E_{I_r}^{j_r}.$$
\end{defn}

\begin{rem}
We verify that the dual element $v^*$ is well-defined. We need to show that the integers $j_r$ are non-negative. These integers are indeed positive numbers for $r=1,...,m.$ In the definition of standard monomials we have seen that
$$i_r \leq |\cap_{I_r \subset I_s}I_s|-|I_r|+\deg(I_r)-2.$$This shows that $j_r \geq 1$ when $i_r>0$ and $I_r$ is an internal vertex. The case of external vertices is treated using the inequality $i_r \leq n-2-|I_r|.$
\end{rem}

The following corollary follows from Definition \ref{dual}.

\begin{cor}
Suppose $v \in R^d(\overline{U}_{n-1})$ is a standard monomial and let $v^* \in R^{n-1-d}(\overline{U}_{n-1})$ be its dual. Then $v^*$ is a standard monomial, and furthermore $v^{**}=v.$
\end{cor}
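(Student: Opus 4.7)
The plan is to verify both assertions by direct inspection of Definition \ref{dual}, exploiting the fact that dualization is manifestly an involution on each piece of the combinatorial data used to define a standard monomial. The essential point is that $v$ and $v^*$ involve exactly the same collection $\{I_1,\ldots,I_m\}$ of exceptional divisors with nonzero exponents (both $i_r$ and $j_r$ are positive, as recorded in the remark preceding the corollary), so their associated forests $\mathcal{G}$ coincide, as do the partition of vertices into roots/internal/external, the numbers $\deg(I_r)$, the intersections $\cap_{I_r\subset I_s}I_s$, the chosen indices at the roots, and therefore the auxiliary set $S$ of Definition \ref{standard}. Writing $S_v=S_{v^*}$ for this common set, both structural halves of the dualization reduce to symmetric involutions that need only be checked for well-definedness and then applied twice.

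For the exceptional-divisor part, the exponent assignment $i_r\mapsto j_r$ has the form $i_r\mapsto N_r-i_r$ with $N_r=|\cap_{I_r\subset I_s}I_s|-|I_r|+\deg(I_r)-1$ at an internal vertex and $N_r=n-1-|I_r|$ at an external one; since $N_r$ depends only on $\mathcal{G}$, this is manifestly an involution on positive integers $\le N_r-1$. To verify that each $j_r$ satisfies the standardness bound $j_r\le\min(n-2-|I_r|,N_r-1)$, the only nontrivial case is that of internal vertices, where one needs $N_r\le n-1-|I_r|$, equivalently $|\cap_{I_r\subset I_s}I_s|+\deg(I_r)\le n$. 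This follows from condition $*$ of Subsection \ref{R}: the minimal vertices $I_{s_1},\ldots,I_{s_d}$ above $I_r$ are pairwise non-nested, so when $d\ge 2$ each pair must have union $\{1,\ldots,n\}$, forcing the complements $I_{s_i}^c$ to be pairwise disjoint subsets of cardinality at least three; the case $d=1$ is immediate from $|I_{s_1}|\le n-3$.

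For the $R^*(C^{n-1})$-part, a direct set-theoretic calculation from Definition \ref{dual} shows that in either case ($n\in\cap_r I_r$ or not) one has $A_{v^*}=S_v-(A_v\cup B_v)$ and $B_{v^*}=B_v$, which are disjoint subsets of $S_v=S_{v^*}$; combined with the exponent analysis, this shows $v^*$ is a standard monomial. Applying the construction once more yields $A_{v^{**}}=S_v-(A_{v^*}\cup B_{v^*})=S_v-((S_v-A_v-B_v)\cup B_v)=A_v$ and $B_{v^{**}}=B_v$, while the exponent involution restores each $i_r$, so $v^{**}=v$. The only genuinely nontrivial step is the combinatorial inequality proved via condition $*$; the remaining verifications are routine bookkeeping directly from the definitions.
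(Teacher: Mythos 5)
Your proof is correct and follows the same route the paper intends: the paper offers no argument beyond ``follows from Definition \ref{dual}'' (its preceding remark only checks positivity of the exponents $j_r$), and you carry out exactly that direct verification. The one genuinely nontrivial point you supply --- the bound $|\cap_{I_r\subset I_s}I_s|+\deg(I_r)\le n$ at internal vertices, deduced from condition $*$ via disjointness of the complements of the pairwise non-nested minimal vertices above $I_r$, each of size at least $3$ --- is a detail the paper leaves implicit, and your treatment of it is sound.
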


The next lemma will be useful in the proof of the Proposition \ref{tri} and the identity \ref{Number}:

\begin{lemma}\label{deg}
Let $v=a(v)b(v)E(v)$ be as in the Definition \ref{dual}, and $\mathcal{G}$ be the associated forest. For a vertex $i \in V_\mathcal{G}$ corresponding to the subset $I_i$ of the set $\{1,...,n\}$, 
the equality $$\sum_{\overline{i}}(i_r+j_r)=n-1-|I_i|$$ holds. Here $\overline{i}$ is the closure of $i$ in $\mathcal{G}$ defined in the Definition \ref{Graph}.
\end{lemma}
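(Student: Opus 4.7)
The plan is to induct on $|\overline{i}|$, using the recursive structure of the closure in $\mathcal{G}$. The base case is when $i$ is external, so $\overline{i}=\{i\}$, and the identity collapses to $i_i+j_i=n-1-|I_i|$, which is precisely the definition of $j_i$ in the external case.

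For the inductive step, let $i$ be internal with children $j_1,\ldots,j_k$ in $\mathcal{G}$ (where $k=\deg(i)$), i.e., the subsets $I_{j_a}$ are the minimal elements of $\{I_s : I_i\subsetneq I_s\}$. My first step is to establish the disjoint decomposition
\[\overline{i}=\{i\}\sqcup\bigsqcup_{a=1}^k \overline{j_a}.\]
The inclusion $\supseteq$ is immediate. For the reverse, any $r\in\overline{i}\setminus\{i\}$ has $I_r\supsetneq I_i$, so choosing a minimal $I_s$ with $I_i\subsetneq I_s\subseteq I_r$ exhibits some child $s=j_a$ with $r\in\overline{j_a}$. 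Disjointness is the first place I need hypothesis $*$: if $r\in\overline{j_a}\cap\overline{j_b}$ with $a\neq b$, then $I_r\supseteq I_{j_a}\cup I_{j_b}$, but $I_{j_a}$ and $I_{j_b}$ are distinct minimal covers of $I_i$, so $*$ forces $I_{j_a}\cup I_{j_b}=\{1,\ldots,n\}$, contradicting $|I_r|\leq n-3$.

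The second step is a combinatorial identity:
\[\Bigl|\bigcap_{I_i\subsetneq I_s} I_s\Bigr| = (1-k)n+\sum_{a=1}^k |I_{j_a}|.\]
The same walk-up argument shows the left-hand intersection equals $\bigcap_a I_{j_a}$, and then condition $*$ applied to each pair of distinct children makes the complements $I_{j_a}^c$ (all contained in $I_i^c$) pairwise disjoint, so $|\bigcap_a I_{j_a}|=n-\sum_a(n-|I_{j_a}|)$, as claimed.

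Now I apply the definition of $j_r$ at the internal vertex $i$ to get $i_i+j_i=|\bigcap_{I_i\subsetneq I_s}I_s|-|I_i|+k-1$, invoke the inductive hypothesis on each $\overline{j_a}$ to get $\sum_{\overline{j_a}}(i_r+j_r)=n-1-|I_{j_a}|$, substitute the identity from the second step, and verify that the $\sum_a|I_{j_a}|$ terms and the $kn$ terms cancel cleanly, leaving $n-1-|I_i|$. The main obstacle is steps one and two: both the disjointness of the subforests $\overline{j_a}$ and the intersection formula rely crucially on combining the pairwise alternative $*$ with the size bound $|I_r|\leq n-3$ built into the definition of $\overline{U}_{n-1}$. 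Once those are in hand, the rest is bookkeeping.
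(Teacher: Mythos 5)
Your proof is correct and is simply a careful unpacking of what the paper dismisses in one line as ``immediate from the definition of the $j_r$'s'': the telescoping induction over the forest, with the two combinatorial inputs (disjointness of the subtrees $\overline{j_a}$ and the identity $|\bigcap_{I_i\subsetneq I_s}I_s|=(1-k)n+\sum_a|I_{j_a}|$, both resting on the alternative $*$ together with $|I_r|\le n-3$) correctly identified and verified. There is no substantive difference in approach, only in the level of detail.
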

\begin{proof}
It is immediate from the definition of the $j_r$'s above.
\end{proof}

\subsection{The pairing $R^d(\overline{U}_{n-1}) \times R^{n-1-d}(\overline{U}_{n-1})$}
In the previous part, we defined dual elements for standard monomials. Below, we will see that the resulting intersection matrix between the standard 
monomials and their duals consists of square blocks on the main diagonal, whose entries are, up to a sign, intersection numbers in $R^{|S|}(C^{S}),$ for certain sets $S,$ and all blocks under the diagonal are zero. 
To prove the stated properties of the intersection matrix, we introduce a natural filtration \footnote{The definition of this filtration on the tautological ring was formulated after a question of E. Looijenga.} on the tautological ring.

\begin{defn}\label{filter}
Let $v$ be a standard monomial as given in Definition \ref{standard}, and let $J_1,...,J_s$ be roots of the associated forest. Define $p(v)$ to be the degree of the element $$a(v)b(v) \cap_{r=1}^s X_{J_r} \in A^*(C^{n-1}),$$ 
which is the same as the integer $$\deg{a(v)b(v)}+n-|\cap_{r=1}^s J_r|-s.$$ The subspace $F^p R^*(\overline{U}_{n-1})$ 
of the tautological ring is defined to be the $\mathbb{Q}$-vector space generated by standard monomials $v$ satisfying $p(v) \geq p.$
\end{defn}

\begin{prop}\label{fil}
(a) For any integer $p,$ we have that $F^{p+1}R^*(\overline{U}_{n-1}) \subseteq F^pR^*(\overline{U}_{n-1}).$

(b) Let $v \in F^pR^*(\overline{U}_{n-1})$ and $w \in R^d(\overline{U}_{n-1})$ be such that $w \ll v.$ If $p+d \geq n,$ then $v.w$ is zero. In particular, $F^n R^*(\overline{U}_{n-1})$ is zero.
\end{prop}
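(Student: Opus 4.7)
Part (a) is immediate from Definition \ref{filter}, since a standard monomial $v$ with $p(v) \geq p+1$ also satisfies $p(v) \geq p$.

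For Part (b), my plan is to establish two claims: (i) $F^n R^*(\overline{U}_{n-1}) = 0$, and (ii) for a standard monomial $v \in F^p$ and a monomial $w \ll v$ of degree $d$, the product $v \cdot w$ lies in $F^{p+d}$. Granting both, the main assertion is immediate ($v \cdot w \in F^{p+d} \subseteq F^n = 0$ when $p+d \geq n$), and the ``in particular'' statement is exactly (i).

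To prove (i), let $v = a(v) b(v) E(v)$ be standard with $p(v) \geq n$, and put $\alpha = a(v) b(v) \in R^*(C^{n-1})$ and $Z = \bigcap_{k=1}^s X_{J_k}$, where $J_1, \ldots, J_s$ are the roots of the associated forest. Then $p(v) = \deg \alpha + \mathrm{codim}(Z) \geq n$ forces $\deg \alpha > \dim Z$. Since every $I_r$ contains some root $J_{k(r)}$, we have $X_{I_r} \supseteq X_{J_{k(r)}}$ and $\bigcap_r X_{I_r} = Z$; therefore $E(v)$ is supported on $\pi^{-1}(Z)$ under the blow-down $\pi \colon \overline{U}_{n-1} \to C^{n-1}$, and may be written as $\iota_* \gamma$ for some class $\gamma$ and the inclusion $\iota \colon \pi^{-1}(Z) \hookrightarrow \overline{U}_{n-1}$. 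By the projection formula, $v = \pi^* \alpha \cdot \iota_* \gamma = \iota_*((\pi|_{\pi^{-1}(Z)})^* (\alpha|_Z) \cdot \gamma)$, which vanishes because $\alpha|_Z \in A^{\deg \alpha}(Z) = 0$.

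For (ii), by linearity I reduce to $v$ a standard monomial, and by induction on $d$ to $w$ a single degree-one generator. If $w \in \{a_i, b_{j,k}\}$, then $v \cdot w$ either vanishes or is a standard monomial with the same forest and $\deg(a(v) b(v))$ increased by one, so $p$ grows by one. If $w = E_I$ with $I$ strictly less than every $I_r$ of $v$, the nonvanishing criterion of Section \ref{R} imposes $I \subseteq I_r$ or $I \cup I_r = \{1, \ldots, n\}$ for each $r$. Adding $I$ to the forest makes $I$ a new root and demotes any old root $J_k$ with $I \subsetneq J_k$; a case analysis using transversality of the $X_J$'s and the bounds $|I|, |I_r| \leq n-3$ shows that $\mathrm{codim}(\bigcap X_{\mathrm{new}}) \geq \mathrm{codim}(\bigcap X_{\mathrm{old}}) + 1$ (writing $\mathrm{new}$, $\mathrm{old}$ for the intersections over new and old roots), so $p$ grows by at least one. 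When $v \cdot w$ is not yet standard (the exponent bound of Definition \ref{standard} being violated by some internal vertex), I straighten via the third bullet of Section \ref{R} to obtain a sum of strictly smaller monomials; the geometric meaning of $p$ as a codimension ensures this process is $p$-nondecreasing.

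The principal obstacle is the case analysis in (ii) for $w = E_I$, especially the ``union-covers-all'' configuration where $I$ overlaps each root in a non-containment fashion; verifying the codimension increment requires carefully combining transversality of the $X_J$'s with the size constraint $|I|, |J_k| \leq n-3$. A related subtlety is verifying that every branch of the straightening preserves $F^{p+d}$, since the Chern-polynomial-type relations can alter the forest structure.
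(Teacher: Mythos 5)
Part (a) and your claim (i) (the ``in particular'' clause) are fine, and (i) is in essence the paper's own argument: restrict $\alpha=a(v)b(v)$ to $Z=\bigcap_k X_{J_k}$, note $\dim Z=|\bigcap_k J_k|+s-1$ by transversality, and observe $\deg\alpha>\dim Z$. The problem is the main assertion of (b), which you route entirely through claim (ii), and claim (ii) is not established. Two concrete issues. First, the reduction by ``induction on $d$ to $w$ a single degree-one generator'' is not sound as stated: after multiplying $v$ by one generator $g$ and straightening into standard monomials $v'$, the relations of Section \ref{R} (e.g.\ $E_{I}E_{J}=(a_\ell-\sum_{K\subseteq I}E_K)E_{J}$ for $I\subsetneq J$) introduce exceptional divisors $E_K$ with $K$ \emph{smaller} than the remaining factors of $w$, so the hypothesis $w'\ll v'$ needed to continue the induction can fail, and you have no control over those new terms. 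Second, even for a single factor $E_I$ the codimension increment and, more seriously, the assertion that every branch of the straightening is ``$p$-nondecreasing'' are exactly the hard content; you flag them as the principal obstacle and do not carry them out, so the proof is incomplete where it matters.

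The paper shows that claim (ii) is unnecessary: the dimension count of your claim (i) applies verbatim with $\alpha$ replaced by $\alpha\cdot w$. The hypothesis $w\ll v$ is used precisely here --- it guarantees that every $E_K$ occurring in $w$ satisfies $K<J_r$ for all roots $J_r$, so $a(v)b(v)\cdot w$ is pulled back from the intermediate blow-up $Y$ of $C^{n-1}$ along the collection $\{J: J<J_r \text{ for all } r\}$, on which the proper transforms $\widetilde{X}_{J_1},\dots,\widetilde{X}_{J_s}$ still live. The class
$$a(v)b(v)\cdot\prod_{r=1}^{s}[\widetilde{X}_{J_r}]\cdot w\in R^*(Y)$$
has degree $\deg(a(v)b(v))+\bigl(n-|\cap_r J_r|-s\bigr)+d=p(v)+d\geq n>\dim Y$, hence vanishes; since $\prod_r E_{J_r}$ is supported over $\bigcap_r\widetilde{X}_{J_r}$, the factor $a(v)b(v)\prod_r E_{J_r}\cdot w$ of $v\cdot w$ vanishes as well. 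No decomposition of $v\cdot w$ into standard monomials, and no compatibility of the product with the filtration, is needed. I recommend you replace claim (ii) by this direct argument.
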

\begin{proof}
The first statement is immediate from Definition \ref{filter}. Let us prove (b). 
Let $v$ be given as in Definition \ref{standard}. Denote by $Y$ the blow-up of $C^{n-1}$ corresponding to the collection $$\{J \subset \{1,...,n\}: J < J_r \ \mathrm{for} \ 1\leq r \leq s\}$$

Note that the dimension $\dim \cap_{r=1}^s X_{J_r}$ of the transversal intersection $\cap_{r=1}^s X_{J_r}$ is equal to $|\cap_{r=1}^s J_r|+s-1.$ 
The product $$a(v)b(v).\prod_{r=1}^s \widetilde{X}_{J_r}.w \in R^{*}(Y)$$ is zero since its degree, which is $$\deg(a(v)b(v))+n-|\cap_{r=1}^s J_r|-s+d,$$ is at least $n,$ by assumption. 
This means that the product $$a(v)b(v).\prod_{r=1}^s E_{J_r}.w \in R^*(\overline{U}_{n-1}),$$ which is a factor of $v.w,$ is zero as well.
\end{proof}

Using the proven lemma we are able to prove the following vanishing result: 

\begin{prop}\label{tri}
Suppose $v_1,v_2 \in R^d(\overline{U}_{n-1})$ are standard monomials satisfying $E(v_1)<E(v_2).$ Then $v_1.v_2^*=0.$
\end{prop}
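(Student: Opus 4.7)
My plan is to reduce the vanishing of $v_1\cdot v_2^*$ to Proposition~\ref{fil}(b) by tracking the $E$-exponents of the product and applying Lemma~\ref{deg} at the index where $v_1$ and $v_2$ first differ.

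Let $r_0$ be the largest index at which the $E$-exponents of $v_1$ and $v_2$ differ. By Definition~\ref{<}, $i^{(1)}_r=i^{(2)}_r$ for every $r>r_0$ and $i^{(1)}_{r_0}<i^{(2)}_{r_0}$. By Definition~\ref{dual}, $v_2^*$ has the same supporting collection $\{I_r\}$ as $v_2$, hence the same associated forest $\mathcal{G}$, and its exponents $j^{(2)}_r$ depend only on $\mathcal{G}$ and on the $i^{(2)}_r$. Thus in $v_1\cdot v_2^*$ the exponent at each vertex $E_{I_r}$ is $i^{(1)}_r+j^{(2)}_r$.

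Applying Lemma~\ref{deg} to $v_2$ at the vertex $I_{r_0}$ yields $\sum_{r\in\overline{r_0}}(i^{(2)}_r+j^{(2)}_r)=n-1-|I_{r_0}|$. Every $r\in\overline{r_0}\setminus\{r_0\}$ satisfies $I_{r_0}\subsetneq I_r$, hence $|I_{r_0}|<|I_r|$, hence $r>r_0$ by the definition of the ordering on subsets; so the exponents of $v_1$ and $v_2$ coincide on $\overline{r_0}\setminus\{r_0\}$. Together with $i^{(1)}_{r_0}<i^{(2)}_{r_0}$ this gives the strict inequality
\[
\sum_{r\in\overline{r_0}}\bigl(i^{(1)}_r+j^{(2)}_r\bigr)\;<\;n-1-|I_{r_0}|.
\]

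I then factor $v_1\cdot v_2^*=A\cdot B$ where $B:=\prod_{r\in\overline{r_0}}E_{I_r}^{\,i^{(1)}_r+j^{(2)}_r}$ and $A$ collects the remaining $a$-, $b$- and exceptional-divisor factors (those indexed by $r\notin\overline{r_0}$). Since $\deg(v_1\cdot v_2^*)=n-1$ and $\deg B<n-1-|I_{r_0}|$, we obtain $\deg A>|I_{r_0}|$. The roots of the forest of $B$ reduce to the single vertex $I_{r_0}$, so Definition~\ref{filter} identifies $p(B)=n-|I_{r_0}|$; hence $p(B)+\deg A>n$. The maximality of $r_0$ ensures that every $E_I$ dividing $B$ corresponds to a subset strictly above every $E_I$ dividing $A$ in the ordering on subsets, yielding the hypothesis $A\ll B$, and Proposition~\ref{fil}(b) then forces $A\cdot B=0$.

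The main obstacle is rigorously verifying $A\ll B$: the ordering of Definition~\ref{<} compares entire monomials rather than only their $E$-supports, so one must also control the $a$- and $b$-contributions inside $A$ against each $E_I$ factor of $B$. The cleanest resolution is to first expand $A$ into standard form via the relations in~\ref{R}, then apply Proposition~\ref{fil}(b) monomial by monomial, using that the maximality of $r_0$ forces every $E$-factor of $B$ to strictly dominate every $E$-factor produced in $A$. A secondary technicality is that Definition~\ref{dual} gives two formulas for $j^{(2)}_{r_0}$ depending on whether $I_{r_0}$ is internal or external in $\mathcal{G}$, but Lemma~\ref{deg} treats both cases uniformly, so the estimate above is unaffected.
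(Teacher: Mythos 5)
Your overall strategy --- split $v_1\cdot v_2^*$ into a ``large'' factor supported on exceptional divisors and a ``small'' complementary factor, then invoke Proposition \ref{fil}(b) via a degree count driven by Lemma \ref{deg} and the strict inequality $i^{(1)}_{r_0}<i^{(2)}_{r_0}$ at the first differing vertex --- is the same as the paper's. But your choice of decomposition introduces a genuine gap. You take $B=\prod_{r\in\overline{r_0}}E_{I_r}^{\,i^{(1)}_r+j^{(2)}_r}$, i.e. only the vertices \emph{containing} $I_{r_0}$, and put everything else into $A$. The hypothesis $A\ll B$ then fails in general: condition $*$ allows vertices $I_r$ with $r>r_0$, $I_{r_0}\not\subseteq I_r$ and $I_{r_0}\cup I_r=\{1,...,n\}$ (for instance $I_{r_0}=\{1,2,3\}$ and $I_r=\{4,...,8\}$ when $n=8$). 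Such $E_{I_r}$ land in $A$, yet $I_r>I_{r_0}$ while $E_{I_{r_0}}$ divides $B$, so $A\not< E_{I_{r_0}}$ under Definition \ref{<} and Proposition \ref{fil}(b) does not apply. Your justification (``the maximality of $r_0$ forces every $E$-factor of $B$ to strictly dominate every $E$-factor of $A$'') is exactly the false step, and your proposed repair --- rewriting $A$ in standard form via the relations of \ref{R} --- does not help, since those rewritings lower monomials in the order of Definition \ref{<} but do not remove large exceptional divisors from the support of $A$.

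The paper avoids this by letting $w$ be the product of all factors strictly less than $E_{I_{r_0}}$ and $v$ the rest, so that \emph{all} $E_{I_r}$ with $I_r\geq I_{r_0}$ --- including those with $I_{r_0}\cup I_r=\{1,...,n\}$ --- go into $v$ and $w\ll v$ holds by construction. The price is that the forest of $v$ has several roots ($I_{r_0}$ together with the roots $J_{s_0+1},\dots,J_s$ satisfying $I_{r_0}\cup J_r=\{1,...,n\}$), so $p(v)$ and $\deg(w)$ must be computed for a multi-root forest; this is the $s_0$ versus $s$ bookkeeping in the paper's displayed inequality, and it is not a formal consequence of your single-root estimate. (Separately, your value $p(B)=n-|I_{r_0}|$ is off by one: Definition \ref{filter} gives $n-1-|I_{r_0}|$ for a single root and no $a,b$ factors. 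That slip is harmless, since $\deg A\geq |I_{r_0}|+1$ still yields $p(B)+\deg A\geq n$; the real problem is the failure of $A\ll B$.)
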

\begin{proof}
It is enough to write $v_1.v_2^*$ as a product $v.w,$ for $v,w \in R^*(\overline{U}_{n-1})$ satisfying the properties given in the proposition \ref{fil}. 
To find $v$ and $w,$ let $v_1,v_2$ be given as in Definition ~\ref{<}, and denote by $\{J_1,...,J_s\}$ the set of roots of the graph associated to the monomial 
$E=\prod_{r=r_0+1}^m E_{I_r}^{i_r}.$ By relabeling the roots we may assume that there is an $s_0 \geq 0$ such that $I_{r_0} \subset J_r$ for $1 \leq r \leq s_0,$ 
and the equality $I_{r_0} \cup J_r=\{1,...,n\}$ holds for $s_0 < r \leq s.$ Let $w$ be the product of all monomials in $v_1.v_2^*$ which are strictly less than $E_{I_{r_0}}$ and $v$ 
be the product of the other factors, so that $v_1.v_2^*=v.w.$ Notice that $w \ll v,$ by the definition of $v$ and $w.$ The degree $d$ of $w$ is computed using Lemma \ref{deg}:
$$d=n+j_{r_0}-i_{r_0}+s-s_0-|I_{r_0}^c|-\sum_{r=s_0+1}^s |J_r^c|=j_{r_0}-i_{r_0}+s-s_0+|I_{r_0} \cap J_{s_0+1} \cap ... \cap J_s|$$
$$ \geq s-s_0+1+|I_{r_0} \cap J_{s_0+1} \cap ... \cap J_s|=n-p(v).$$ From $d+p(v)\geq n$ we see that the product $v.w$ is zero.
\end{proof}

To study the blocks on the main diagonal we proceed as follows:  we first prove an identity which reduces the number of exceptional divisors in the product for certain monomials. 

Let $Y$ be a blow-up of $C^{n-1}$ at some step in the construction of $\overline{U}_{n-1}.$ Suppose that $$V_1 \cap ... \cap V_k \cap W=Z$$ is a transversal intersection of tautological classes, 
where $W=D_1 \cap...\cap D_r$ is a transversal intersection of divisors $D_1,...,D_r \in R^1(Y)$, 
and let $f \in R^*(Y)$ be an element of degree $d=dim(Z).$ Denote by $E_Z$ the exceptional divisor of the blow-up 
$Bl_Z Y$ of $Y$ along $Z$ and by $E_{V_1},...,E_{V_k}$ those of the blow-up $\widetilde{Y}$ of $Bl_Z Y$ along the proper transform of the subvarieties $V_1,...,V_k.$ 

It follows from the Lemma \ref{kernel} that $P_{W/Y}(-E_Z) \in J_{\widetilde{V}/\widetilde{Y}}$, for $V=V_1 \cap ... \cap V_k.$  
Using the same argument as in Lemma ~\ref{num1} we observe that the equality $$f.E_Z^rE_{V_1}...E_{V_k}=(-1)^{r-1}f.W.E_{V_1}...E_{V_k}$$ holds in $R^{r+d+k}(\widetilde{Y}).$

If the codimension of the subvariety $V_i$ is $r_i$ and that of $Z$ is $r_0$, then from the proven result in Lemma \ref{num1} one gets the following identity:
\begin{equation}  \tag{4}\label{number}f.E_Z^rE_{V_1}^{r_1}...E_{V_k}^{r_k}=(-1)^{r_0-k-1}f.Z.\end{equation}

Notice that this identity reduces the computation of certain monomials containing the exceptional divisors to one which belongs to the Chow ring $A^*(Y).$ 
We now use this identity to compute the numbers occurring on the main diagonal of the intersection matrix. 

Let $I_1,...,I_m$ be subsets of the set $\{1,...,n\}$ containing at most $n-3$ elements such that for every pair $I_r$ and $I_s$ the property $*$ holds.
Let $\mathcal{G}$ be the associated forest with roots $J_1,...,J_s$, and for each $1 \leq r \leq s$ such that $n \in J_r,$ let $j_r \in \{1,...,n\}-J_r$ 
be the smallest element. If $n \notin \cap_{r=1}^m I_r$, let $J_1$ be the unique root of $\mathcal{G}$ not containing $n.$ 
Define $$E:=\prod_{r=1}^m E_{I_r}^{i_r},$$ where $$i_r= \left\{ \begin{array}{ll} |\cap_{I_r \subset I_s}I_s|-|I_r|+\deg(I_r)-1 & I_r \ \mathrm{is \ an \ internal \ vertex \ of} \ \mathcal{G} \\ \\
n-1-|I_r| & I_r \ \mathrm{is \ an \ external \ vertex \ of} \ \mathcal{G}.  \\
\end{array} \right. $$

Consider an element $f \in \mathbb{Q}[a_i,b_{i,j}: i,j \in S]$, of degree $|\cap_{r=1}^m I_r|+s-1$, where
$$S= \left\{ \begin{array}{ll}
\{j_1,...,j_s\} \cup (\cap_{r=1}^m I_r- \{n\}) & n \in \cap_{r=1}^m I_r \\ \\
\{j_2,...,j_s\} \cup (\cap_{r=1}^m I_r) & n \notin \cap_{r=1}^m I_r.  \\
\end{array} \right.$$
Then from the identity \eqref{number} it follows that 
\begin{equation}  \tag{5}\label{Number}f.E= (-1)^{\varepsilon}. f.\prod_{i \in \{1,...,n-1\}-S}a_i.\end{equation}
where $\varepsilon=n+|\cap_{r=1}^m I_r|+\sum_{i \in V(G)}\deg(i)$, using Lemma \ref{deg}. 

Note that for any $v \in R^d(\overline{U}_{n-1})$, the product $E:=E(v)E(v^*)$ is in the form given above according to the Definition \ref{dual}. 

\begin{thm}
For any $0 \leq d \leq n-1,$ the pairing $$R^d(\overline{U}_{n-1}) \times R^{n-1-d}(\overline{U}_{n-1}) \rightarrow \mathbb{Q}$$ is perfect.
\end{thm}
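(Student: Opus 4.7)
The strategy is to exhibit the intersection matrix of standard monomials in degree $d$ against duals of standard monomials in degree $d$ as block triangular with non-singular diagonal blocks, reducing the non-degeneracy to the perfectness of the pairing $R^*(C^n) \times R^*(C^n) \to \mathbb{Q}$ established in Section 4. Since $v \mapsto v^*$ is an involution on standard monomials (Corollary after Definition \ref{dual}), it gives a bijection between standard generators of $R^d(\overline{U}_{n-1})$ and of $R^{n-1-d}(\overline{U}_{n-1})$, so I consider the square matrix $M$ with entries $M_{v,w} := v \cdot w^*$ indexed by pairs of standard monomials of degree $d$.

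First, I would order the rows and columns of $M$ by grouping standard monomials according to their exceptional-divisor part $E(v)$, using the relation $<$ of Definition \ref{<}. Monomials sharing the same $E(v)$ pair with duals $w^*$ sharing the same $E(w^*)$, since the exponents $j_r$ are determined by the $i_r$ and the graph $\mathcal{G}$ in Definition \ref{dual}, so the block decompositions on the row and column sides are compatible. Proposition \ref{tri} then forces $M_{v,w} = 0$ whenever $E(v) < E(w)$, giving a block triangular structure on $M$.

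Second, I would compute each diagonal block, corresponding to a fixed exceptional divisor class $E$. For standard monomials $v, w$ of degree $d$ with $E(v) = E(w) = E$, the exceptional part of $v \cdot w^*$ equals $E(v) \cdot E(w^*) = E \cdot E(v^*)$, and by Definition \ref{dual} together with Lemma \ref{deg} the exponents on the $E_{I_r}$ factors are precisely those required to apply identity \eqref{Number}. That identity collapses the product to
\[
v \cdot w^* = (-1)^\varepsilon \, a(v)b(v) \, a(w^*)b(w^*) \, \prod_{i \in \{1,\dots,n-1\}-S} a_i
\]
in $R^{n-1}(C^{n-1})$, where the set $S$ and the sign $(-1)^\varepsilon$ depend only on $E$, not on the particular $v, w$ within the block. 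After stripping the global sign and the common top-dimensional factor $\prod_{i \notin S} a_i$, I would recognize the resulting matrix as the intersection matrix of the pairing
\[
R^{d'}(C^S) \times R^{|S|-d'}(C^S) \to \mathbb{Q}
\]
(for the appropriate degree $d'$) on the standard generators of $R^*(C^S)$ that occur in the block; this pairing is perfect by the Proposition in Section 4. Thus each diagonal block is non-singular, and a block triangular matrix with non-singular diagonal blocks is invertible, which proves simultaneously that the standard monomials form a basis of $R^*(\overline{U}_{n-1})$ and that the pairing is perfect.

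The main obstacle will be the bookkeeping in the second step: checking that the exponents $i_r$ and $j_r$ combine to exactly the configuration demanded by the hypothesis of \eqref{Number}, that the set $S$ and the sign $\varepsilon$ really are constant across each block, and that the surviving factor $\prod_{i \notin S} a_i$ is uniform. Once these compatibilities are verified, the perfectness of the $R^*(C^S)$ pairing finishes the argument immediately.
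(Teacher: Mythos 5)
Your proposal follows essentially the same route as the paper: group the standard monomials by their exceptional part $E(v)$, use Proposition \ref{tri} for the block-triangular structure, collapse each diagonal block via identity \eqref{Number} to the intersection pairing on $R^*(C^S)$, and conclude from the perfectness established in Section 4. The one point to be careful about (which the paper also only treats briefly) is that the diagonal block indexed by all standard monomials need not itself be non-singular, since those monomials satisfy relations in $R^*(C^S)$; one must note that the kernel of the block is generated by relations already holding in $R^*(\overline{U}_{n-1})$ and pass to a basis of $R^{d-\deg(E)}(C^S)$ before asserting invertibility.
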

\begin{proof}
Let $A:=\{v_1,...,v_r\} \subset R^d(\overline{U}_{n-1}),$ where $v_1<...<v_r,$ be the set of standard monomials of degree $d,$ and $\{v_1^*,...,v_r^*\} \subset R^{n-1-d}(\overline{U}_{n-1})$ 
be the set of their duals defined above. For a monomial $$E \in \mathbb{Q}[E_I: I \subset \{1,...,n\} \ \mathrm{and} \ |I| \leq n-3],$$ define $$A_E:=\{v \in A: E(v)=E\}.$$ 
Let $\mathcal{G}$ be the graph associated to the monomial $E,$ and define $S$ as in Definition ~\ref{standard}. For $v_i,v_j \in A_E$ the number $$v_i.v_j^* \in R^{n-1}(\overline{U}_{n-1})=\mathbb{Q}$$ 
is the same as $$(-1)^{\varepsilon}a(v_i)b(v_i).a(v_j^*)b(v_j^*) \in R^{S}(C^{S})=\mathbb{Q},$$ by the identity \eqref{Number}, where $\varepsilon=n+|\cap_{r=1}^m I_r|+\sum_{i \in V(\mathcal{G})}\deg(i).$ 

This means that the intersection matrices $(v_i.v_j^*)$ and $(a(v_i)b(v_i).a(v_j^*)b(v_j^*)),$ for $v_i,v_j$ in the set $A_E,$ are the same up to a sign after the identifications above. 
From the study of the tautological ring $R^*(C^{S})$ of $C^{S},$ we know that the kernel of the matrix above is generated by relations in $R^*(C^{S}).$ 
After choosing a basis for $R^{d-\deg(E)}(C^{S}),$ the resulting matrix is invertible. It means that the intersection matrix of the pairing $$R^d(\overline{U}_{n-1}) \times R^{n-1-d}(\overline{U}_{n-1}) \rightarrow \mathbb{Q}$$ 
with this choice of basis elements for the tautological groups consists of invertible blocks on the main diagonal and zero blocks under the diagonal, hence is invertible. This proves the claim.
\end{proof}

\section{\bf The tautological ring $R^*(M_{1,n}^{ct})$}
In the first part we obtained the morphism $F:\overline{U}_{n-1} \rightarrow M_{1,n}^{ct},$
induced from the family of curves $\pi:\overline{U}_n \rightarrow \overline{U}_{n-1}.$ 
The morphism $F$ induces a ring homomorphism $$F^*: A^*(M_{1,n}^{ct}) \rightarrow  A^*(\overline{U}_{n-1}).$$

For a subset $J \subset \{1,...,n\},$ the pull back $F^*(D_J)$ of the divisor class $D_J$ is a subvariety of $\overline{U}_{n-1}$ for which the fiber $\pi^{-1}(P)$ is a nodal curve of type given by $D_J,$ 
when $P$ is its generic point. It follows that $P$ is a point of the proper transform of the subvariety $X_I,$ where $I:=\{1,...,n\}-J,$ when $|J| \geq 3,$ and belongs to the proper transform of the divisors 
$a_i,d_{j,k}$ if $J=\{i,n\},\{j,k\},$ for $1 \leq i \leq n-1$ and $1 \leq j<k \leq n-1.$ But the proper transform of $X_I$ is equal to $E_I,$ when $|I|\leq n-3,$ and those of the divisors $a_i,d_{j,k}$ are 
$a_i-\sum_{i \notin I \subset \{1,...,n-1\}}E_I$ and $d_{j,k}-\sum_{I \subset \{1,...,n\}-\{j,k\}}E_I$ respectively, for $1 \leq i \leq n-1$ and $1 \leq j<k \leq n-1.$ 
This means that $$F^*(D_{\{i,n\}})=a_i-\sum_{i \notin I \subset \{1,...,n-1\}}E_I \ \mathrm{for} \ 1 \leq i \leq n-1,$$
$$F^*(D_{\{j,k\}})=d_{j,k}-\sum_{I \subset \{1,...,n\}-\{j,k\}}E_I \ \mathrm{for} \  1 \leq j < k \leq n-1,$$ $$F^*(D_J)=E_{\{1,...,n\}-J} \ \mathrm{for} \ |J| \geq 3.$$ From this we see that the pull-back homomorphism $F^*$ 
sends tautological classes to tautological classes, and defines a ring homomorphism $$F^*: R^*(M_{1,n}^{ct}) \rightarrow  R^*(\overline{U}_{n-1}).$$

If we rewrite the expressions above, we get that $$a_i=F^*(\sum_{i ,n \in I}D_I)\ \mathrm{for} \ 1 \leq i \leq n-1, \qquad d_{i,j}=F^*(\sum_{j,k \in I}D_I) \ \mathrm{for} \  1 \leq j < k \leq n-1,$$
$$E_I=F^*(D_{\{1,...,n\}-I})  \ \mathrm{for} \ |I| \leq n-3.$$ This shows that $F^*$ is a surjection. We prove that $F^*$ 
is injective by extending the function $$G:\{a_i,d_{j,k},E_I: 1 \leq i \leq n-1 , 1 \leq j < k \leq n-1,I \subset \{1,...,n\} \ \mathrm{and} \ |I| \leq n-3 \} \rightarrow R^*(M_{1,n}^{ct}),$$
defined by the rule $$G(a_i)=\sum_{i ,n \in I}D_I\ \mathrm{for} \ 1 \leq i \leq n-1, \qquad G(d_{j,k})=\sum_{j,k \in I}D_I \ \mathrm{for} \  1 \leq i \leq n-1 \ \mathrm{and} \ 1 \leq j < k \leq n-1,$$
$$G(E_I)=D_{I^c} \ \mathrm{for} \ |I| \leq n-3$$ to a ring homomorphism $$G:R^*(\overline{U}_{n-1}) \rightarrow R^*(M_{1,n}^{ct}).$$ 
This is done by verifying that all relations between elements $a_i,d_{j,k},E_I$'s on the left hand side hold between classes on the right hand side. 
To simplify the notation, we drop the letter $G$ for tautological classes in $R^*(M_{1,n}^{ct}).$ For instance, we write $a_i=\sum_{i ,n \in I}D_I$ and $E_I=D_{I^c}$ for a subset $I \subset \{1,...,n\}$ with $|I|\leq n-3.$

Let us introduce the following notation: suppose $S$ is a subset of the set $\{1,...,n\}.$ By $M_{1,S}^{ct},$ we mean the moduli space of stable curves of genus one of compact type whose marking set is $S.$
Let $$\pi_S:M_{1,n}^{ct} \rightarrow M_{1,S}^{ct}$$ be the projection which forgets the markings in $\{1,...,n\}-S$ and contracts unstable components.

\begin{itemize}
\item
We first deal with relations among generators of $R^*(C^{n-1}):$ notice that $$a_i=\pi_{\{i,n\}}^*(D_{\{i,n\}}),  \qquad  d_{j,k}=\pi_{\{j,k\}}^*(D_{\{j,k\}}), \qquad b_{j,k}=\pi_{\{j,k,n\}}^*(D_{\{j,k\}}-D_{\{j,n\}}-D_{\{k,n\}}-D_{\{j,k,n\}}).$$
From the relations $D_{\{i,n\}}^2=D_{\{j,k\}}^2=0$ in $R^2(M_{1,\{i,n\}}^{ct})$ and $R^2(M_{1,\{j,k\}}^{ct}),$ we obtain that the relations $a_i^2=d_{j,k}^2=0$ hold in $R^2(M_{1,n}^{ct}),$ for $1\leq i \leq n-1$ and $1 \leq j < k \leq n-1.$
On the other hand, the relation $$(D_{\{i,j\}}-D_{\{i,n\}}-D_{\{j,n\}}-D_{\{i,j,n\}})(D_{\{i,n\}}+D_{\{i,j,n\}})=0 \in R^2(M_{1,\{i,j,n\}}^{ct})$$ says that $a_i.b_{i,j}=0.$ From the relation $d_{j,k}^2=0$ obtained above, we get that $b_{j,k}^2=-2a_ja_k.$
Now suppose that $i,j,k,l$ are distinct elements of the set $\{1,..,n-1\}.$ As we saw in the forth section, the relation ~\eqref{G} in $R^2(M^{ct}_{1,\{i,j,k,n\}})$ can be written as $$12(a_ib_{j,k}-b_{i,j}b_{i,k})=0.$$
The relation $$12(b_{i,j}b_{k,l}+b_{i,k}b_{j,l}+b_{i,l}b_{j,k})=0$$ is the pull-back of the relation above to $R^2(M^{ct}_{1,\{i,j,k,l,n\}})$ along the morphism $$\pi_{\{i,j,k,n\}}:M^{ct}_{1,\{i,j,k,l,n\}} \rightarrow M^{ct}_{1,\{i,j,k,n\}}.$$
This shows that the classes $a_i,b_{j,k} \in R^*(M_{1,n}^{ct})$ satisfy all relations among $a_i,b_{j,k} \in R^*(\overline{U}_{n-1}).$

\item
Note that the following $$D_I.D_J \neq 0 \Rightarrow I \subseteq J \ \mathrm{or} \ J \subseteq I \ \mathrm{or} \ I \cap J =\emptyset$$ is true. 
But this can be written as $$E_I.E_J \neq 0 \Rightarrow I \subseteq J \ \mathrm{or} \ J \subseteq I \ \mathrm{or} \ I \cup J=\{1,...,n\}.$$ 
This proves that the $E_I$'s in $R^*(M_{1,n}^{ct})$ satisfy the first class of relations between $E_I$'s in $R^*(\overline{U}_{n-1})$ obtained above.

\item
For any $I \subset \{1,...,n\}$ with $|I| \leq n-3,$ we found the relations $x.E_I=0$ for $x \in \ker(i_I^*),$ where $i_I:X_I \rightarrow C^{n-1}$ denotes the inclusion. 
If $n \notin I,$ then $\ker(i_I^*)$ is generated by divisor classes $a_i,b_{i,j},$ where $i \in J:=\{1,...,n-1\}-I,$ and $j \in \{1,...,n-1\}$ is different from $i.$
Let us see that $a_i.E_I=0$ in this case:$$a_i.E_I=D_J^2+\sum_{J_0:i,n \in J_0 \subset J}D_{J_0}.D_J+\sum_{J_0:i,n \in J_0,J \subset J_0}D_{J_0}.D_J.$$
But this expression is zero from the following known formula for $\psi$ classes in genus zero and one:

\begin{prop}
(a) The following relation holds in $A^1(\overline{M}_{0,n}):$ $$\psi_i=\sum_{j,k \notin I , i \in I ,|I| \geq 2 }D_I$$ for some fixed distinct $j,k \in \{1,...,n\}-\{i\}.$

(b) The following relation holds in $A^1(M^{ct}_{1,n}):$ $$\psi_i=\sum_{i \in I, |I| \geq 2}D_I.$$
\end{prop}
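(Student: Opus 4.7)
The plan is to prove both parts by parallel induction on $n$, using the standard comparison formula for $\psi$-classes under a forgetful map together with the pullback formula for boundary divisors.

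For (a), I take as base case $n=4$. Since $\overline{M}_{0,4}\cong\mathbb{P}^1$, every boundary point represents the same divisor class, so $\psi_i=D_{\{i,l\}}$, where $l$ is the unique marking distinct from $i,j,k$. The right-hand side in this case collapses to the single term $D_{\{i,l\}}$, since $i\in I$, $j,k\notin I$, and $|I|\geq 2$ force $I=\{i,l\}$. For $n\geq 5$, I fix any $m\notin\{i,j,k\}$ and consider the forgetful map $\pi:\overline{M}_{0,n}\to\overline{M}_{0,n-1}$ dropping $m$. The comparison formula $\psi_i=\pi^*\psi_i+D_{\{i,m\}}$ together with the pullback rule $\pi^*D_I=D_I+D_{I\cup\{m\}}$ for boundary divisors with $|I|\geq 2$, combined with the inductive hypothesis applied to $\overline{M}_{0,n-1}$ with the same fixed $j,k$, reproduces the asserted expression after collecting terms.

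For (b), the argument runs in parallel. The base case is $n=1$, where $M_{1,1}^{ct}=M_{1,1}$ satisfies $A^1(M_{1,1})_{\mathbb{Q}}=0$ and the right-hand sum is empty; for $n=2$ one applies the comparison formula along $\pi:M_{1,2}^{ct}\to M_{1,1}$ and uses $\pi^*\psi_1=0$ rationally to conclude $\psi_1=D_{\{1,2\}}$. For $n\geq 3$, pick any $m\neq i$ and apply the forgetful map $\pi:M_{1,n}^{ct}\to M_{1,n-1}^{ct}$; the same comparison formula and pullback rule remain valid on this open subset. Using the inductive hypothesis and $\pi^*D_I=D_I+D_{I\cup\{m\}}$, the pullback $\pi^*\psi_i$ expands as
$$\sum_{i\in I,\,m\notin I,\,|I|\geq 2}D_I\;+\;\sum_{i,m\in I,\,|I|\geq 3}D_I,$$
and adding the correction $D_{\{i,m\}}$ completes the sum to $\sum_{i\in I,\,|I|\geq 2}D_I$, as required.

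The main technical point, though not really a genuine obstacle, is to justify that the standard comparison and pullback formulas on $\overline{M}_{g,n}$ descend cleanly to the open subsets in question. For part (a) there is no $\Delta_{irr}$ to worry about, since compact type and full moduli coincide in genus zero. For part (b) any $\Delta_{irr}$-contributions that appear in the formulas on $\overline{M}_{1,n}$ restrict to zero on the compact-type locus, leaving precisely the clean formulas used above. With these reductions in hand, the induction is essentially routine in both cases.
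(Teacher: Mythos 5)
Your argument is correct, but it takes a different route from the paper: there the proposition is disposed of in one line by citing Arbarello--Cornalba ((a) is their Proposition 1.6, and (b) is obtained by restricting the divisor expression of their Proposition 1.9 to $M_{1,n}^{ct}$, where $\delta_{irr}$ dies). You instead give a self-contained induction on $n$ using the comparison formula $\psi_i=\pi^*\psi_i+D_{\{i,m\}}$ and the pullback rule $\pi^*D_I=D_I+D_{I\cup\{m\}}$; this is essentially the standard proof of the cited propositions, so nothing is lost, and the bookkeeping is right: in both parts the pulled-back sum produces exactly all admissible $I$ except $I=\{i,m\}$ (which appears as $I'\cup\{m\}$ only for the inadmissible $I'=\{i\}$), and that missing term is precisely the correction divisor. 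Your base cases are also fine ($\psi_i$ is a point class on $\overline{M}_{0,4}\cong\mathbb{P}^1$, and $\psi_1=\tfrac{1}{12}\delta_{irr}$ vanishes on $M_{1,1}=M_{1,1}^{ct}$), as is the observation that the forgetful map preserves the compact-type locus and that all $\delta_{irr}$-contributions to the standard formulas restrict to zero there. What your approach buys is independence from the reference and an explicit check that everything descends to the open locus; what the paper's approach buys is brevity, since the full boundary expression on $\overline{M}_{1,n}$ is already in the literature and restriction is immediate.
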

\begin{proof}
(a) is Proposition 1.6 in [AC]. To prove (b), it is enough to restrict the divisor classes given in Proposition 1.9 of [AC], to the space $M_{1,n}^{ct}.$
\end{proof}

If $i \in \{1,...,n-1\}-I,$ and $j \in \{1,...,n-1\}$ is distinct from $i,$ we saw that $a_i.E_I=0,$ and by the same argument as above we see that 
$a_j.E_I=d_{i,j}.E_J,$ from which it follows that $$b_{i,j}.E_I=(d_{i,j}-a_i-a_j).E_I=0.$$ The case $n \in I$ is proven by the same argument.

\item
We get a relation $P_{W/Y}(-E_Z).E_{V_1}...E_{V_k}=0,$ when the subvariety $Z$ is a transversal intersection $V_1 \cap ... \cap V_k \cap W.$
After possibly relabeling the indices, we can assume that $$Z=X_{I_0}, \qquad V_i=X_{I_i}, \ \mathrm{for} \ 1 \leq i \leq k, \qquad W=\prod_{i=r_0+1}^{r_1}a_{i}.\prod_{j=1}^{k-1}a_{r_j+1},$$
where $r_0 \leq r_1 < ... <r_k <n,$ and $I_0=\{1,...,r_0\},I_i^c=\{r_i+1,...,r_{i+1}\},$ for $1 \leq i <k,$ and $I_k^c=\{r_k+1,...,n\}.$ Let us prove that 
$$P_{W/C^{n-1}}(-\sum_{J \subseteq I_0}E_I)E_{I_1}...E_{I_k}=0 \in R^{r_1-r_0+2k-1}(M_{1,n}^{ct})$$ by showing that any monomial in the expansion of this expression is zero. 
Let $$\prod_{i=r_0+1}^{r_1}E_{J_i}.\prod_{j=1}^{k-1}E_{J_{r_j+1}}.E_{I_1}...E_{I_k},$$ where $$ \ i,n \in J_i^c \ \mathrm{for} \ r_0+1 \leq i \leq r_1, \ \mathrm{and}  \ r_j+1,n \in J_j^c \ \mathrm{for} \ 1 \leq j \leq k-1,$$ be any such monomial.

For $r_0+1 \leq i \leq r_1,$ if the product $E_{J_i}.E_{I_k}$ is non-zero, then $J_i \subseteq I_k-\{i\}.$ For $1 \leq j \leq k-1,$ if the product $E_{J_{r_j+1}}.E_{I_j}$ is non-zero, then $J_{r_j+1} \subseteq I_j.$ On the other hand, since 
$n \notin J_{i},J_{r_j+1}$ for all $i,j,$ the product $E_{J_{i_1}}.E_{J_{i_2}}$ is non-zero only if $J_{i_1} \subseteq J_{i_2}$ or $J_{i_2} \subseteq J_{i_1}.$ It follows that the subsets $J_i$'s are totally ordered by inclusion, 
which means that their intersection is one of them. We conclude that for some $i$ the inclusion $J_i \subseteq I_0$ holds. But this term is excluded from expression above, whence the product is zero.

\item
For any subset $I \subset \{1,...,n\},$ where $|I| \leq n-3,$ we prove that $P_{X_I/C^{n-1}}(-\sum_{J \subseteq I}E_J)$ 
is zero by the same argument as in the previous case, by showing that the monomials occurring in the expansion of the expression above are all zero.
\end{itemize}

The argument above shows that $F^*$ is an isomorphism, and hence, we have the following result:

\begin{thm}
The ring homomorphism $F^*:R^*(M_{1,n}^{ct}) \rightarrow R^*(\overline{U}_{n-1})$ is an isomorphism. In particular, for any $0 \leq d \leq n-1,$ the pairing 
$$R^d(M_{1,n}^{ct}) \times R^{n-1-d}(M_{1,n}^{ct}) \rightarrow \mathbb{Q}$$ is perfect. In other words, $R^*(M_{1,n}^{ct})$ is a Gorenstein ring.
 \end{thm}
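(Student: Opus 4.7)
The plan is to exhibit $F^*$ as an isomorphism by constructing an explicit two-sided inverse on generators; the perfect pairing and the Gorenstein statement then follow immediately from the corresponding theorem for $R^*(\overline{U}_{n-1})$ proven in Section~5. Surjectivity is already clear from inverting the formulas for $F^*(D_J)$ computed above: each generator $a_i$, $d_{j,k}$, $E_I$ of $R^*(\overline{U}_{n-1})$ is visibly a pullback of a $\mathbb{Q}$-combination of boundary classes.

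For injectivity I would define a candidate inverse $G$ on the generators of $R^*(\overline{U}_{n-1})$ by $a_i \mapsto \sum_{i,n \in I} D_I$, $d_{j,k} \mapsto \sum_{j,k \in I} D_I$, and $E_I \mapsto D_{I^c}$, and then prove that $G$ extends to a ring homomorphism $R^*(\overline{U}_{n-1}) \to R^*(M_{1,n}^{ct})$. By the presentation of $R^*(\overline{U}_{n-1})$ given in Section~5.1, extending $G$ amounts to verifying that the four families of defining relations listed there---the $R^*(C^{n-1})$ relations among $a_i$ and $b_{j,k}$; the vanishing $E_I \cdot E_J = 0$ outside nested or complementary index pairs; the kernel relations $x \cdot E_I = 0$ for $x \in \ker(i_I^*)$; and the Chern-polynomial together with transversal-intersection relations---all continue to hold after applying $G$. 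Granted this, $G \circ F^*$ and $F^* \circ G$ agree with the identity on generators, so $F^*$ is a ring isomorphism, and the perfect pairing on $R^*(\overline{U}_{n-1})$ transfers.

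For the first family, the elementary vanishings $a_i^2 = 0$ and $a_i b_{i,j} = 0$ arise by squaring divisors pulled back from $M^{ct}_{1,\{i,n\}}$ and $M^{ct}_{1,\{i,j,n\}}$, while the two Getzler-type relations come from restricting the Getzler relation \eqref{G} to the compact-type locus and pulling back along the appropriate forgetful morphisms $\pi_{\{i,j,k,n\}}$ and $\pi_{\{i,j,k,l,n\}}$. The $E_I \cdot E_J$ non-vanishing criterion becomes the standard combinatorial fact that two boundary divisors on $M_{1,n}^{ct}$ intersect nontrivially only when their index sets are nested or disjoint, which is exactly the $*$ condition after complementation. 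The kernel relations $x \cdot E_I = 0$ reduce, via the Arbarello--Cornalba formulas expressing the $\psi$ classes as sums of boundary divisors, to short case-by-case identities that one reads off from which $D_J$ meet $D_{I^c}$ nontrivially.

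The main obstacle is the fourth family. Here the plan is to expand $P_{W/Y}(-E_Z) \cdot E_{V_1} \cdots E_{V_k}$ as a sum of monomials in the boundary classes and to show that each such monomial vanishes in $R^*(M_{1,n}^{ct})$. The key combinatorial observation is that the incompatibility rule $E_{J_i} \cdot E_{I_j} \ne 0 \Rightarrow J_i \subseteq I_j$ (or conversely), together with the analogous constraint among the $J_i$ themselves, forces any collection of indices surviving in such a monomial to be totally ordered by inclusion; the minimal element is then necessarily contained in the blow-up center $I_0$, but precisely those terms are excluded from the expansion of $P_{W/Y}(-E_Z)$, so the product vanishes. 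The same argument handles the pure Chern-polynomial relation $P_{X_I/C^{n-1}}(-\sum_{J \subseteq I} E_J) = 0$. Once all four families are verified, $G$ is the ring homomorphism inverse to $F^*$, and the Gorenstein property of $R^*(M_{1,n}^{ct})$ with socle in degree $n-1$ follows by transporting the perfect pairing from Section~5.
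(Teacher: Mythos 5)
Your proposal follows essentially the same route as the paper: surjectivity by inverting the explicit formulas for $F^*(D_J)$, injectivity by extending the map $G$ on generators to a ring homomorphism via verification of the four families of relations from Section~5.1 (including the Getzler-type relations via pullback along forgetful maps, the Arbarello--Cornalba $\psi$-class formulas for the kernel relations, and the total-ordering-by-inclusion argument forcing a surviving index set into the blow-up center for the Chern-polynomial relations), and then transporting the perfect pairing from $R^*(\overline{U}_{n-1})$. This matches the paper's argument in both structure and in the key combinatorial steps.
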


\section{\textbf{Appendix: Derivation of the relations \eqref{e1} and \eqref{e2}}}
In this appendix we explain why the relations \eqref{e1} and \eqref{e2} follow from Getzler's relation \eqref{G}. First note that the restriction of the relation \eqref{G} 
to the space $M_{1,4}^{ct}$ becomes \begin{equation} \tag{6}\label{Gc} 12 \delta_{2,2} - 4 \delta_{2,3} - 2\delta_{2,4} + 6\delta_{3,4} = 0. \end{equation}
Then we compute the pull-back of the classes above to the space $\overline{U}_3$ along the morphism $$F : \overline{U}_3 \rightarrow M_{1,4}^{ct}.$$
Recall that
\begin{align*}
\delta_{2,2} &= D_{\{1,2\}} \* D_{\{3,4\}} + D_{\{1,3\}} \*
D_{\{2,4\}} + D_{\{1,4\}} \* D_{\{2,3\}} , \\
\delta_{2,3} &= D_{\{1,2\}} \* D_{\{1,2,3\}} + D_{\{1,2\}}
\* D_{\{1,2,4\}} + D_{\{1,3\}} \* D_{\{1,2,3\}} +
D_{\{1,3\}} \* D_{\{1,3,4\}} \\
& + D_{\{1,4\}} \* D_{\{1,2,4\}} + D_{\{1,4\}} \*
D_{\{1,3,4\}} + D_{\{2,3\}} \* D_{\{1,2,3\}} +
D_{\{2,3\}} \* D_{\{2,3,4\}} \\
& + D_{\{2,4\}} \* D_{\{1,2,4\}} + D_{\{2,4\}} \*
D_{\{2,3,4\}} + D_{\{3,4\}} \* D_{\{1,3,4\}} +
D_{\{3,4\}} \* D_{\{2,3,4\}} , \\
\delta_{2,4} &= D_{\{1,2\}} \* D_{\{1,2,3,4\}} + D_{\{1,3\}}
\* D_{\{1,2,3,4\}} + D_{\{1,4\}} \* D_{\{1,2,3,4\}} \\
& + D_{\{2,3\}} \* D_{\{1,2,3,4\}} + D_{\{2,4\}} \*
D_{\{1,2,3,4\}} + D_{\{3,4\}} \* D_{\{1,2,3,4\}} , \\
\delta_{3,4} &= D_{\{1,2,3\}} \* D_{\{1,2,3,4\}} +
D_{\{1,2,4\}} \* D_{\{1,2,3,4\}} + D_{\{1,3,4\}} \* D_{\{1,2,3,4\}} + D_{\{2,3,4\}} \*
D_{\{1,2,3,4\}} .
\end{align*}
From the argument given in section 6 we see that 
$$F^*(D_I ) = E_{\{1,2,3,4\}-I} \ \mathrm{when} \ |I| = 3,4,$$ 
$$F^*(D_{\{i,4\}}) = a_i- E_0- E_j- E_k ,$$
$$ F^*(D_{\{j,k\}}) = d_{j,k} - E_0 - E_i - E_4,$$ 
for  $1 \leq i \leq 3$ and $j \neq k \in \{1, 2,3\}-\{i\},$ from which we conclude that
$$F^*(\delta_{2,2}) = a_1d_{2,3} + a_2d_{1,3} + a_3d_{1,2} + 3E_0^2,$$ 
$$F^*(\delta_{2,3}) = 3(a_1a_2 + a_1a_3 + a_2a_3 + d_{1,2}d_{1,3}) + 3(4E_0 + E_1 + E_2 + E_3 + E_4)E_0,$$
$$F^*(\delta_{2,4}) = -3(2E_0 + E_1 + E_2 + E_3 + E_4)E_0,$$ 
$$F^*(\delta_{3,4}) = (E_1 + E_2 + E_3 + E_4)E_0.$$
If we substitute the expressions above into the relation \eqref{Gc}, we get that $$ 12(a_1d_{2,3}+a_2d_{1,3}+a_3d_{1,2}-a_1a_2-a_1a_3-a_2a_3-d_{1,2}d_{1,3}) = 12(a_1b_{2,3}-b_{1,2}b_{1,3}) = 0.$$
The push-forward of the relation above via the blow-down map to $C^3$ gives the relation \eqref{e1}.

We next deal with the relation \eqref{e2}. Recall that $\pi:\overline{M}_{1,5}\rightarrow \overline{M}_{1,4}$ forgets the fifth marking. It induces a morphism from $M^{ct}_{1,5}$ to $M^{ct}_{1,4},$ 
which is denoted by the same letter by abuse of notation. 
We study the pull-back of the relation \eqref{Gc} along the morphism $\pi \circ F,$ where $F:\overline{U}_4 \rightarrow M_{1,5}^{ct}$ 
is the morphism defined in the first section. It is easy to see that
$$(\pi \circ F)^*\delta_{2,2} = d_{1,2}d_{3,4} + d_{1,3}d_{2,4} + d_{1,4}d_{2,3} + 3(E_0 + E_5)^2,$$
$$(\pi \circ F)^* \delta_{2,3} =12(E_0+E_5)^2+3 \left( d_{1,2}d_{1,3}+d_{1,2}d_{1,4}+d_{1,3}d_{1,4}+d_{2,3}d_{2,4}+\sum_{i=1}^4 (E_0+E_5)(E_i+E_{i,5})\right),$$
$$(\pi \circ F)^*\delta_{2,4}= -6(E_0 + E_5)^2 -\sum_{i \neq j \in \{1,2,3,4\}}d_{i,j}(E_{i,5} + E_{j,5}) - 3 \sum_{i=1}^4E_0E_i,$$
$$(\pi \circ F)^*\delta_{3,4} =\sum_{i=1}^4 E_5E_{i,5} + E_0(E_i + E_{i,5}).$$
The substitution of the expressions above into relation \eqref{Gc} yields$$12(b_{1,2}b_{3,4}+b_{1,3}b_{2,4}+b_{1,4}b_{2,3}) = 0,$$ whose push-forward to $C^4$ via the blow-down map is \eqref{e2}.

\bigskip
\bigskip

\end{document}